\newenvironment{rules}
{\begin{list}{\(\cdot\)}{
\itemsep=0em
\leftmargin=5mm
\labelwidth=3mm
\labelsep=3mm
}}{\end{list}}
\newtheorem{lemma}{Lemma}[section]
\newtheorem{corollary}[lemma]{Corollary}
\newtheorem{prop}[lemma]{Proposition}
\newtheorem{thm}[lemma]{Theorem}
\newtheorem{thm*}{Theorem}
\theoremstyle{definition}
\newtheorem{Def}[lemma]{Definition}
\newtheorem*{Def*}{Definition}
\newtheorem*{notation}{Notation}
\newtheorem*{convention}{Convention}
\newtheorem{rmk}[lemma]{Remark}
\newtheorem{construction}[lemma]{Construction}
\theoremstyle{remark}
\newcommand{\cuco}[1]{{\mathcal #1}}
\def\RR{\mathbb{R}}
\def\H{\mathcal{H}}
\def\F{\mathcal{F}}
\def\1{\mathds{1}}
\def\I{\mathfrak{L}}
\def\S{\mathfrak{S}}
\def\MCG{\mathcal{Q}}
\def\T{\mathcal{T}}
\def\Pyr{\widehat{CS}}
\def\Z{\mathfrak{F}}
\def\GG{\mathcal{G}}
\def\X{\mathcal{X}}
\def\Y{\mathcal{Y}}
\def\Int{[0,\infty)}
\let\temp\epsilon
\let\epsilon\varepsilon
\let\varepsilon\temp
\title{Hyperbolic HHS II: Graphs of hierarchically hyperbolic groups}
\author{Davide Spriano}
\date{January 5, 2018}
\begin{document}

\maketitle

\abstract{In this paper we consider a large family of graphs of hierarchically hyperbolic groups (HHG) and show that their fundamental groups admit HHG structures.  To do that, we will investigate the notion of \emph{hierarchical quasi convexity} and show that for a hyperbolic HHS it coincides with the notion of quasi-convexity. The main technical result, for which we expect further applications, is that it is possible to incorporate the HHG structure of a hierarchically hyperbolically embedded subgroup into the HHG structure of the ambient group. 
This generalizes and provides some additional details to a procedure described in \cite{HHSBoundaries}.}

\tableofcontents

\section{Introduction}

The study of hierarchically hyperbolic spaces and groups (respectively HHS and HHG) was introduced by Behrstock, Hagen and Sisto in \cite{HHSI}.
HHS and HHG form a very large class of spaces which contains several examples of interest including mapping class groups, right-angled Artin groups, many CAT(0) cube complexes, most 3-manifold groups, Teichm\"uller space (in any of the standard metrics), etc. A lot is known about HHS, see \cite{HHSII,HHSAsdim2015,HHSFlats,HHSBoundaries}. For instance a remarkable result about hierarchically hyperbolic space is that, under very mild conditions, every top-dimensional quasi-flat in an HHS lies within  finite Hausdorff distance of a finite union of standard orthants. This proves open conjectures of Farb in the case of Mapping Class Group, and Brock in the case of Teichm\"uller space.
For an introductory survey on HHS and HHG, we refer to \cite{HHSSurvey}. 

Given a class of objects of interest, in this case the class of HHG, it is natural to ask if it is possible to obtain new examples by combining objects in the class.
In the case of groups, one of the most natural ways to combine them is to take the fundamental group of an "admissible" graph of groups. In the context of hyperbolic groups, the most comprehensive answer in this sense is provided by the celebrated Bestvina-Feighn Combination Theorem (\cite{BestvinaFeighnCombination, BestvinaFeighnCombinationAddendum}) that provides sufficient conditions
which ensure that the fundamental group of a finite graph of hyperbolic groups is word-hyperbolic.
In \cite{HHSII}, Behrstock, Hagen and Sisto introduced the definition of graph of HHG and proved what we will refer to as \emph{the Combination Theorem for HHG} (\cite[Corollary 8.22]{HHSII}, see Theorem \ref{thm: combination HHG}) which provides strong answers in this sense. Indeed, under some hypotheses on the edge maps, they showed that the fundamental group of a graph of HHG is indeed an HHG.

The idea of the Combination Theorem is that, given a graph of HHG \(\GG\), some Cayley graphs of the various edge and vertex groups can be arranged in a "Bass-Serre tree" fashion, giving a space \(\T\) on which \(\pi_1(\GG)\) acts.
The hypotheses basically guarantee that the HHS structures of the various groups glue together to get an HHS structure on \(\T\) that is \(\pi_1(\GG)\) equivariant. 

The main goal of this paper is to show that a large class of graphs of HHG satisfies the hypotheses of the Combination Theorem. One of the key tools that will be used is the concept of \emph{hierarchically hyperbolically embedded subgroups}. Hierarchically hyperbolically embedded subgroups (introduced in \cite{HHSAsdim2015}) are the natural generalization of hyperbolically embedded subgroups to the hierarchical setting. Hyperbolically embedded subgroups were introduced and extensively studied in \cite{DGO} and can be thought of as a generalization of the peripheral structure of a relatively hyperbolic group. The existence of a (non degenerate) hyperbolically embedded subgroup has far reaching consequence, such as acylindrical hyperbolicity of the ambient group (\cite{OsinAcyl}). Conversely, given an acylidrically hyperbolic group, there are constructions that provides several hyperbolically embedded subgroups \cite{DGO}.
Moreover, hyperbolically embedded subgroups are "generic" as showed by Maher and Sisto in \cite{SistoMaher}.
For these reasons, considering hierarchically hyperbolically embedded subgroups should not be thought of as an exceedingly restrictive hypothesis.

Intuitively, the main result of this paper consists in showing that the hypotheses of the Combination Theorem are stable under adding new vertices with hyperbolic vertex groups and whose edge groups are hierarchically hyperbolically embedded in the vertex groups. Before giving a more precise formulation of the main result, let's consider a motivating example.
It is easy to see that a graph of HHG \(\GG\) consisting of a single vertex \(v\) satisfies the hypotheses of the Combination Theorem (Remark \ref{rmk:singleton graph}). Let \(\MCG\) be the vertex HHG associated to \(v\).
Then we can transform the graph \(\GG\) adding a vertex \(w\) and an edge \(e\) between \(v\) and \(w\) such that \(G= G_w\) is hyperbolic and \(H= H_e\) is a quasi-convex subgroup of \(G\). We require moreover that \(H\) is hierarchically hyperbolically embedded in \(\MCG\). Since the starting graph satisfy the Combination Theorem, so does the modified graph of groups. Thus the amalgamated product \(\MCG \ast_H G\) admits an HHG structure. 

It is worth underlining that the only hierarchical-like requirement concerns the embedding \(H \hookrightarrow \MCG\). Since there are no requirements on the HHG structure of \(\MCG\), an hypothesis of this type is clearly needed. 

We consider now the construction above in full generality.
\begin{Def*}
Let \(\GG'= \bigsqcup \GG_i\) be a finite union of finite graphs of HHG. A graph of groups \(\GG\) is \emph{star-obtainable} from \(\GG'\) if \(\GG\) can be obtained from \(\GG'\) by a finite number of the following moves.
\begin{enumerate}
\item Joining two vertices of \(\GG'\) by an edge \(e\) such that \(G_e\) is hyperbolic.
\item Adding a new vertex \(v\) and joining it with a finite set of vertices of \(\GG'\) such that 
		\begin{rules}
		\item the vertex group \(G_v\) is hyperbolic;
		\item the edge groups are quasi-convex subgroups of \(G_v\); 
		\end{rules}
\end{enumerate}
We say that \(\GG\) is \emph{hyperbolic-obtainable} from \(\GG'\) if it is star-obtainable and the following holds.
For each vertex \(v\) of \(\GG'\) the set of edge groups adjacent to \(v\) that correspond to edges of \(\GG - \GG'\) is hierarchically hyperbolically embedded in \(G_v\).
\end{Def*}

The main result of the paper is the following.

\begin{thm*}[\ref{thm: main application combination}]\label{Thm 1 intro}
Let \(\GG\) be a graph of groups that is hyperbolic-obtainable from \(\GG' = \bigsqcup \GG_i\), where each \(\GG_i\) is a finite graph of hierarchically hyperbolic groups that satisfies the hypotheses of the Combination Theorem for HHG.
Then there are HHG structures on the vertices and edges  groups of \(\GG\) such that \(\GG\) satisfies the hypotheses of the Combination Theorem. In particular, \(\pi_1 (\GG)\) admits a hierarchically hyperbolic group structure.
\end{thm*}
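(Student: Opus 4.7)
The plan is to induct on the number of moves used to build $\GG$ from $\GG' = \bigsqcup \GG_i$, maintaining throughout the following inductive invariant: the current graph of groups carries HHG structures on its vertex and edge groups such that the hypotheses of the Combination Theorem are satisfied, and moreover for each vertex $u$ of $\GG'$ the collection of edge groups attached to $u$ by moves applied so far is realized as a hierarchically quasi-convex ``subgroup HHG'' inside the HHG structure of $G_u$. The base case $\GG = \GG'$ is immediate from the hypothesis that each $\GG_i$ satisfies the Combination Theorem.

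For the inductive step, I treat the two move types in parallel. For a move of type (1), adding an edge $e$ between two existing vertices $u$ and $w$, the hyperbolic-obtainability hypothesis guarantees that $G_e$ is hierarchically hyperbolically embedded in both $G_u$ and $G_w$. I endow $G_e$ with its (essentially unique) hyperbolic HHG structure and then apply the main technical result of the paper -- the procedure for incorporating the HHG structure of a hierarchically hyperbolically embedded subgroup into the ambient HHG structure -- to produce modified HHG structures on $G_u$ and $G_w$ in which each edge map is a full hieromorphism satisfying the equivariance and compatibility conditions required by the Combination Theorem. For a move of type (2), adding a new hyperbolic vertex group $G_v$ joined to existing vertices $u_1,\ldots,u_k$, I endow $G_v$ with its trivial one-domain HHG structure. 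Each edge group $G_{e_i}$ is quasi-convex in $G_v$, and by the equivalence of quasi-convexity with hierarchical quasi-convexity in the hyperbolic setting (one of the announced technical results), the edge map $G_{e_i} \hookrightarrow G_v$ is automatically a full hieromorphism. On the other side, $G_{e_i}$ is hierarchically hyperbolically embedded in $G_{u_i}$, and I apply the incorporation procedure there exactly as in move (1).

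The chief obstacle is verifying that the incorporation procedure can be iterated coherently: if several successive moves attach new edges to the same vertex $u$ of $\GG'$, then each application of the procedure must not destroy the HHG structure established by previous incorporations, nor interfere with the portion of the structure used by edges belonging to $\GG'$ itself. I expect this to follow from the localized nature of the incorporation -- it adjoins new domains associated to the embedded subgroup rather than modifying pre-existing ones -- together with the pairwise disjointness properties guaranteed by the definition of a hierarchically hyperbolically embedded collection of subgroups. Once this coherence is established, the closing assertion that $\pi_1(\GG)$ admits an HHG structure follows at once from the Combination Theorem for HHG applied to the constructed HHG structures.
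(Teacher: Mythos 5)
Your overall plan — equip the new hyperbolic vertices and edges with HHG structures via factor systems, then modify the structures on the $\GG'$ vertices by incorporating the hierarchically hyperbolically embedded edge groups — matches the paper's plan, but you choose to iterate the incorporation procedure one edge at a time by induction on the number of moves, whereas the paper applies the incorporation procedure (Proposition~\ref{prop:properties_of_the_new_structure}) \emph{once} per vertex of $\GG'$, handing it the \emph{entire} family of new edge groups incident to that vertex simultaneously. This is not a cosmetic difference: the definition of hyperbolic-obtainable asserts that the \emph{set} of new edge groups at a vertex is hierarchically hyperbolically embedded as a family, and the construction exploits this to cone off all cosets of all the $H_i$ at once, so that the resulting maximal hyperbolic space $\Pyr$ and the factor system on $CS$ are built in one step from a single hyperbolically embedded collection. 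The one-shot approach entirely sidesteps the coherence problem you flag as the ``chief obstacle.''

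That obstacle is a genuine gap in your version, not a detail. After one incorporation the maximal hyperbolic space of the ambient HHG changes from $CS$ to the cone-off $\Pyr$, and the index set is enlarged. For the next iteration you would need to verify (i) that the next edge group is still hierarchically hyperbolically embedded with respect to the \emph{new} generating set that realizes $\Pyr$ as a Cayley graph, and (ii) that the hierarchical quasi-convexity (and fullness, orthogonality, bounded-supports conditions) of the edge maps established in earlier steps survives the change of structure. You assert that the ``localized nature'' of the incorporation should give this, but neither point is automatic: (i) requires a statement about stability of hyperbolic embeddedness under coning off cosets of a subfamily, and (ii) is in fact the hardest part of the paper's own verification, even in the one-shot setting — the proof must show that images of edges of $\GG'$, which were hierarchically quasi-convex for $(\MCG_v,\S_v)$, remain so for $(\MCG_v,\I_v)$, and this uses Proposition~\ref{prop: Y is hierarchically quasi-convex}, Corollary~\ref{cor: Simple Factor System}, and Lemma~\ref{lem: The element of H projects uniformly on the U} in a nontrivial chain. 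Your sketch omits this entirely. So the proposal is on the right track in spirit but, as written, leaves the central verification unaddressed; adopting the paper's simultaneous incorporation removes the need for an inductive coherence argument, while the hierarchical quasi-convexity check for old edges under the new structure still has to be carried out explicitly.
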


There are two key ingredients in the proof of Theorem \ref{Thm 1 intro}: 
the flexibility in the HHG structures on hyperbolic groups provided by \cite[Corollary 5.23]{SprianoHyperbolicHHSI} and a construction that allows to incorporate the HHG structure of a hierarchically hyperbolically embedded subgroup \(H\) into an HHG \(\MCG\) (Proposition \ref{prop:properties_of_the_new_structure}). 
The latter is greatly inspired, and in fact is a generalization of \cite[Section 6]{HHSAsdim2015}. It also provides a more detailed proof of the Large Link Lemma condition of \cite[Proposition 6.14]{HHSAsdim2015}.

\subsection*{Outline}
Section \ref{sec:Background} recalls the definition of hierarchically hyperbolic spaces and the some results related to cone-offs and factor systems. 
Section \ref{sec: Hierarchical quasi-convexity} shows the equivalence of quasi-convexity and hierarchical quasi-convexity in hyperbolic spaces. It also recalls some fundamental theorems of the theory of HHS such as the existence of hierarchy paths and the distance formula. 
Section \ref{section:Graphs of HHG} contains the main result of the paper. In  Subsection \ref{subsec:definitions and notations} we recall the definitions and some results connected to HHG and (hierarchically) hyperbolically embedded subgroups. In Subsection \ref{subsec: changing str I} we state the main result and applications, and structure the ideas of the proof. The bulk of the proofs and technicalities is moved to Subsection \ref{subsec: changing str II}.

\subsection*{Acknowledgments}
The author would like to thank Alessandro Sisto for suggesting the topic and for very helpful comments and suggestions.

\section{Background}\label{sec:Background}

\subsection{Cone-offs}\label{subsec: cone-offs}

We recall the following results.
\begin{Def}[Coning-off]
	Let \(\Gamma\) be a graph, \(H\) a connected subgraph of \(\Gamma\). 
	We define the \emph{cone-off} of \(\Gamma\) with respect to \(H\), and denote it by \(\widehat{\Gamma}\),
	as the graph obtained from \(\Gamma\) adding 
	an edge connecting each pair of vertices in \((H \times H) -\Delta_{H\times H}\), where \(\Delta_{H\times H}\) denotes the diagonal.
	We call the edges added in such a way \emph{\(H\)-components}.
	Similarly, the cone-off with respect to a family of connected subgraphs \(\H=\{H_i\}\) is obtained adding the 
	\(H_i\)-components for each \(H_i \in \H\). 
	An edge is an \(\H\)-component if it is a \(H_i\)-component for some \(H_i \in \H\).
\end{Def}

\begin{prop}[Kapovich-Rafi, Bowditch]\label{Kapovich-Rafi}
	Let \(\Gamma\) be a connected graph with simplicial metric \(d_\Gamma\) such that \((X,d_X)\) is \(\delta\)-hyperbolic. 
	Let \(K> 0\) and \(\H\) be a family of \(K\)-quasi-convex subgraphs of \(\Gamma\).
	Let \(\widehat{\Gamma}\) be the cone-off of \(\Gamma\) with respect to the family \(\H\).
	Then \(\widehat{\Gamma}\) is \(\delta'\)-hyperbolic (with respect to the path metric) 
	for some constant \(\delta' >0\) depending only on \(K\) and \(\delta\).  Moreover there exists \(H=H(K,\delta )>0\) such that whenever
	\(x,y\in V(\Gamma)\), \([x,y]_\Gamma\) is a \(d_\Gamma\)-geodesic from \(x\) to \(y\) in \(\Gamma\) and
	\( [x,y]_{\widehat{\Gamma}}\) is a \(d_{\widehat{\Gamma}}\)-geodesic from \(x\) to \(y\) in \(\widehat{\Gamma}\) 
	then \([x,y]_\Gamma\) and \([x,y]_{\widehat{\Gamma}}\) are \(H\)-Hausdorff close in \((\widehat{\Gamma},d_{\widehat{\Gamma}})\).
\end{prop}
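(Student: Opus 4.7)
The plan is to prove both assertions simultaneously via a lifting argument that converts $\widehat{\Gamma}$-geodesics into $\Gamma$-quasi-geodesics with uniformly controlled constants, so that hyperbolicity of $\widehat{\Gamma}$ is inherited from $\Gamma$ and Hausdorff-closeness of the two geodesics falls out as a byproduct.

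First I would set up the lift. Let $\hat{\gamma} = [x,y]_{\widehat{\Gamma}}$ be a $\widehat{\Gamma}$-geodesic between vertices $x, y \in V(\Gamma)$. Decompose $\hat{\gamma}$ into a concatenation alternating $\Gamma$-edges with $\H$-component edges, and for each $\H$-component edge joining $u, v \in H_i$ replace it with a $\Gamma$-geodesic $[u,v]_\Gamma$. By $K$-quasi-convexity each such replacement stays in the $K$-neighborhood of $H_i$ in $\Gamma$. This yields a $\Gamma$-path $\tilde{\gamma}$ with the same endpoints as $\hat{\gamma}$ whose image in $\widehat{\Gamma}$ is obtained from $\hat{\gamma}$ by replacing length-$1$ cone edges with subpaths that are, in $\widehat{\Gamma}$, contained in a single $\H$-component and hence have $\widehat{\Gamma}$-diameter at most $2$. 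Thus $\hat{\gamma}$ and $\tilde{\gamma}$ are automatically $O(1)$-Hausdorff close in $\widehat{\Gamma}$.

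The heart of the argument, and the step I expect to be the main obstacle, is showing that $\tilde{\gamma}$ is a $(\lambda, \lambda)$-quasi-geodesic in $\Gamma$ for some $\lambda = \lambda(K, \delta)$. Suppose two points $p, q \in \tilde{\gamma}$ at large $\tilde{\gamma}$-parameter distance were $\Gamma$-close. Using $\delta$-hyperbolicity and $K$-quasi-convexity, one separates cases: either $p, q$ lie near the same $H_i$ or near distinct $H_i, H_j$ whose $K$-neighborhoods come within bounded $\Gamma$-distance of each other. In the first case the two points can be bridged in $\widehat{\Gamma}$ by moving a bounded amount inside $\Gamma$ and then using a single $H_i$-component; in the second case, a bounded $\Gamma$-path together with two $\H$-component edges does the job. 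Either way one produces a strict $\widehat{\Gamma}$-shortcut of $\hat{\gamma}$, contradicting geodesicity. Running this argument carefully with explicit constants yields the desired $\lambda = \lambda(K, \delta)$; this is essentially the Kapovich--Rafi/Bowditch bounded-backtracking lemma.

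Once $\tilde{\gamma}$ is known to be a uniform $\Gamma$-quasi-geodesic, Morse stability in $\delta$-hyperbolic $\Gamma$ gives a constant $D = D(K, \delta)$ such that $\tilde{\gamma}$ and $[x,y]_\Gamma$ are $D$-Hausdorff close in $\Gamma$, hence in $\widehat{\Gamma}$; combined with the $O(1)$-closeness of $\hat{\gamma}$ to $\tilde{\gamma}$ in $\widehat{\Gamma}$ this yields the ``moreover'' statement with $H = D + O(1)$. For hyperbolicity, take a $\widehat{\Gamma}$-geodesic triangle with vertices in $V(\Gamma)$ (midpoints of cone edges cost only $O(1)$), lift its sides to a triangle of uniform $\Gamma$-quasi-geodesics, apply slim-triangles in $\Gamma$ plus Morse stability to get $\delta''$-slimness in $\Gamma$, and transfer it to $\widehat{\Gamma}$ via the $1$-Lipschitz inclusion together with the already-proven closeness of $\widehat{\Gamma}$-geodesics to their lifts. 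This produces the desired $\delta' = \delta'(K, \delta)$.
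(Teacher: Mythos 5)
Note first that the paper does not prove this proposition; it is recalled from Kapovich--Rafi and Bowditch, so there is no in-paper argument to compare against. As for your attempt: the central claim, that the de-electrification \(\tilde\gamma\) of an \emph{arbitrary} \(\widehat\Gamma\)-geodesic \(\hat\gamma\) is a \((\lambda,\lambda)\)-quasi-geodesic of \(\Gamma\) with \(\lambda=\lambda(K,\delta)\), is false, and the shortcut argument you sketch does not rescue it. Two lifted \(\H\)-components can overlap in \(\Gamma\) along an arbitrarily long stretch, producing an arbitrarily long backtrack in \(\tilde\gamma\) that induces \emph{no} \(\widehat\Gamma\)-shortcut of \(\hat\gamma\), because the backtracking portion lies inside just two cone edges and hence already has \(\widehat\Gamma\)-length at most \(1\) along \(\hat\gamma\). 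Concretely, let \(\Gamma\) be a tree (so \(\delta=0\)) and let \(H_1,H_2\) be two bi-infinite geodesics meeting exactly in a segment \([0,N]\); both are \(0\)-quasi-convex. Take \(x\in H_1\) and \(y\in H_2\) far from the overlap and entering it at \(0\). Then \(x\to N\to y\), using one cone edge in each \(H_i\), is a valid \(\widehat\Gamma\)-geodesic, and its de-electrification traverses \([0,N]\) twice; the two copies of \(N/2\) are at parameter distance \(N\) in \(\tilde\gamma\) but at \(\Gamma\)-distance \(0\), with \(N\) unbounded in terms of \(K,\delta\). In the antecedent of your argument, \(p,q\) are \(\Gamma\)-close and yet their preimages on \(\hat\gamma\) are already within \(\widehat\Gamma\)-distance \(1\) of each other, so geodesicity of \(\hat\gamma\) is not contradicted. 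Consequently the Morse-stability step that was supposed to deliver both conclusions never gets off the ground.

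What is true, and what the paper's companion result Proposition~\ref{prop: Bound Hausdorff distance} is carefully stated to assert, is that one can \emph{choose} a \(\widehat\Gamma\)-quasi-geodesic whose de-electrification is a \(\Gamma\)-quasi-geodesic; the existential quantifier is essential and, moreover, that result already presupposes hyperbolicity of \(\widehat\Gamma\), so invoking something like it here would be circular. The standard proof of the present proposition runs in the opposite direction: push \(\Gamma\)-geodesics forward and apply the Bowditch/Hamenst\"adt ``guessing geodesics'' criterion to the family \(L(x,y)\) given by the image of \([x,y]_\Gamma\) in \(\widehat\Gamma\). Slimness of \(\Gamma\)-triangles and the fact that the inclusion \(\Gamma\hookrightarrow\widehat\Gamma\) is \(1\)-Lipschitz give the triangle condition, while for \(d_{\widehat\Gamma}(x,y)\le 1\) either \([x,y]_\Gamma\) is a single edge or \(x,y\) lie in a common \(H_i\), in which case \(K\)-quasi-convexity bounds the \(\widehat\Gamma\)-diameter of \([x,y]_\Gamma\) by roughly \(2K+1\). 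The criterion then yields hyperbolicity of \(\widehat\Gamma\) together with the uniform Hausdorff closeness of \([x,y]_\Gamma\) and \([x,y]_{\widehat\Gamma}\) in \(\widehat\Gamma\) in one stroke.
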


\begin{Def}
Let \(\widehat{\Gamma}\) be the cone-off of a graph \(\Gamma\) with respect to a family of quasi-isometrically embedded  subgraphs \(\H\). Let 
	\(\gamma = u_1 *e_1 * \cdots * e_n * u_{n+1}\) be a path of \(\widehat{\Gamma}\), where each \(e_i\) is an \(H_i\)-component for some
	\(H_i \in \H\), and the \(u_i\) are (possibly trivial) segments of \(\Gamma\). 
	The \emph{embedded-de-electrification} \(\widetilde{\gamma}^e\)
	of \(\gamma\) is the concatenation \(u_1 * \eta_1 * \cdots * \eta_n *u_{n+1}\) where each \(\eta_i\) is a geodesic segment of \(H_i\) connecting 
	the endpoints of \(e_i\). If \(e_i\) was an \(H\)-component, we say that \(\eta_i\) is an \emph{\(H\)-piece}.
	A piece of \(\widetilde{\gamma}\) is an \emph{\(\H\)-piece} if it is an \(H\)-piece for some \(H \in \H\). 
\end{Def}

\begin{prop}[\cite{SprianoHyperbolicHHSI} Corollary 2.30]\label{prop: Bound Hausdorff distance}
Let \(\Gamma\) be a \(\delta\)-hyperbolic graph, \(\H\) a family of uniformly quasi-isometrically embedded subgraphs and \(\widehat{\Gamma}\) the cone-off of \(\Gamma\) with respect to \(\H\). 
Then there exist \(\tau_1 = \tau_1 (\delta, K)\) and \(\tau_2= \tau_2 (\delta, K)\) such that for each pair of points \(x,y \in \Gamma\) there exists a \(\tau_1\)-quasi-geodesic \(\gamma'\) of \(\widehat{\Gamma}\) with the property that for each  embedded-de-electrification \(\widetilde{\gamma}'\) of \(\gamma'\), \(\widetilde{\gamma}'\) is a \(\tau_2\)-quasi-geodesic of \(\Gamma\).
\end{prop}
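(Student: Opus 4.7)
The plan is to construct \(\gamma'\) from a \(\Gamma\)-geodesic between \(x\) and \(y\) by short-cutting long near-penetrations of subgraphs in \(\H\) with electric edges, and then to verify that every embedded-de-electrification of the resulting path stays uniformly close to this \(\Gamma\)-geodesic. The central tool is that, since \(\Gamma\) is \(\delta\)-hyperbolic and each \(H\in\H\) is \(K\)-quasi-isometrically embedded, the Morse lemma provides a constant \(R=R(\delta,K)\) such that every \(H\) is \(R\)-quasi-convex in \(\Gamma\); this is what makes the embedded-de-electrification controllable.

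Set \(\gamma_0:=[x,y]_\Gamma\). First I would select a maximal family of disjoint subsegments \(\sigma_1,\ldots,\sigma_n\) of \(\gamma_0\) whose endpoints lie in some \(H_i\in\H\) and whose \(\Gamma\)-lengths exceed a threshold \(L=L(\delta,K)\) taken much larger than \(R\) and \(\delta\). Define \(\gamma'\) as the path of \(\widehat{\Gamma}\) obtained by replacing each \(\sigma_i\) with the corresponding \(H_i\)-component and leaving the rest of \(\gamma_0\) untouched. Because \(d_{\widehat{\Gamma}}\leq d_\Gamma\) and length can only decrease under the replacements, a length comparison combined with Proposition \ref{Kapovich-Rafi} (so that \(\gamma_0\) and any \(\widehat{\Gamma}\)-geodesic \([x,y]_{\widehat{\Gamma}}\) are \(H\)-Hausdorff close in \(\widehat{\Gamma}\)) shows that \(\gamma'\) is a \(\tau_1\)-quasi-geodesic of \(\widehat{\Gamma}\) with \(\tau_1=\tau_1(\delta,K)\).

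The main content is the statement about embedded-de-electrifications. Fix any such \(\widetilde{\gamma}'\) and consider one of its \(H_i\)-pieces \(\eta_i\): it is a geodesic of \(H_i\) connecting the endpoints of \(\sigma_i\). By the \(K\)-quasi-isometric embedding, \(\eta_i\) is a uniform quasi-geodesic of \(\Gamma\), and by \(R\)-quasi-convexity of \(H_i\) it lies within a uniform neighborhood of \(\sigma_i\); the pieces of \(\widetilde{\gamma}'\) coming from \(\gamma'\setminus\{\text{electric edges}\}\) are subsegments of \(\gamma_0\). Hence \(\widetilde{\gamma}'\) is contained in a uniform neighborhood of \(\gamma_0\), and its total \(\Gamma\)-length is bounded linearly in \(d_\Gamma(x,y)\). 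Taking \(L\) larger than the stability constant for \((K,K)\)-quasi-geodesics in a \(\delta\)-hyperbolic space ensures that the concatenation of \(u_i\)'s and \(\eta_i\)'s satisfies a local quasi-geodesic condition without backtracking, so the local-to-global principle for quasi-geodesics in hyperbolic spaces upgrades this to a genuine \(\tau_2\)-quasi-geodesic with \(\tau_2=\tau_2(\delta,K)\).

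The principal difficulty is calibrating the threshold \(L\) and handling possible overlaps between different subgraphs \(H_i,H_j\in\H\) that nearly coincide along \(\gamma_0\): one must make the selection of the \(\sigma_i\) canonical enough (for instance, greedy left-to-right with a quantitative separation lower bound for the pieces \(u_i\)) so that consecutive electric edges do not produce uncontrolled fellow-travel in the de-electrification, and so that all constants depend only on \(\delta\) and \(K\).
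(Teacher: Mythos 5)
The paper cites this result from elsewhere, so I evaluate the proposal on its merits. The overall plan — shortcut a $\Gamma$-geodesic $\gamma_0$ along near-penetrations of members of $\H$, then control de-electrifications via $R$-quasi-convexity and local-to-global — is the standard Farb/Bowditch electrification strategy, and steps 5 onwards (each $\eta_i$ is a uniform $\Gamma$-quasi-geodesic, fellow-travels $\sigma_i$, so $\widetilde{\gamma}'$ stays near $\gamma_0$ and can be promoted to a global quasi-geodesic by the local-to-global principle) are sound once the right path $\gamma'$ is in hand.

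The construction of $\gamma'$ itself, however, has a genuine gap. You take $\sigma_i$ to be subsegments of $\gamma_0$ \emph{whose endpoints lie in some} $H\in\H$. But $\gamma_0$ can fellow-travel $H$ at a small positive distance for arbitrarily long without ever meeting $H$; in that case no $\sigma_i$ is selected there and $\gamma'=\gamma_0$ over that stretch, which is then very long in $\widehat{\Gamma}$ compared to the $\widehat{\Gamma}$-geodesic (which cuts through $H$ with a single electric edge). Concretely, take $\Gamma=\mathrm{Cay}(F_2,\{a,b\})$, $\H=\{\langle a\rangle\}$, $x=b$, $y=ba^n$: $\gamma_0$ never meets $\langle a\rangle$, yet $d_{\widehat{\Gamma}}(x,y)=3$ while $L_{\widehat{\Gamma}}(\gamma_0)=n$, so your $\gamma'$ is not a $\widehat{\Gamma}$-quasi-geodesic. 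The fix is to select $\sigma_i$ to be maximal subsegments of $\gamma_0$ lying in an $M(\delta,K)$-neighborhood of some $H\in\H$ with $\Gamma$-length $>L$, and to replace each by a short connector to the closest-point projection on $H$, an $H$-component, and a short connector back; the de-electrification argument then still goes through since the connectors have bounded length. Relatedly, your justification that $\gamma'$ is a $\widehat{\Gamma}$-quasi-geodesic (``a length comparison combined with Proposition \ref{Kapovich-Rafi}'') is not sufficient as written: Proposition \ref{Kapovich-Rafi} gives Hausdorff-closeness of $\gamma_0$ to the $\widehat{\Gamma}$-geodesic, not length control, and the above example shows $\gamma_0$ alone is not a $\widehat{\Gamma}$-quasi-geodesic. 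One really needs to argue that after shortcutting, no long subsegment of $\gamma'$ stays within $M$ of a single $H$ (this is the purpose of a greedy/maximal choice with a separation lower bound for the $u_i$'s, which you allude to but do not carry out).
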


\subsection{Hierarchically hyperbolic spaces}\label{subsec: HHS}

The goal of this section is to recall the definition  of hierarcahically hyperbolic spaces and some important properties. For additional details, the main reference is \cite{HHSII}. 
\begin{Def}[Hierarchically hyperbolic space, \cite{HHSII}]\label{defn:space_with_distance_formula}
The $q$-quasigeodesic space  $(\cuco X,d_{\cuco X})$ is a \emph{hierarchically hyperbolic space} if there exists $\delta\geq0$, an index set $\mathfrak S$, and a set $\{CW:W\in\mathfrak S\}$ of $\delta$--hyperbolic spaces $(C U,d_U)$,  such that the following conditions are satisfied:\begin{enumerate}
\item\textbf{(Projections.)}\label{item:dfs_curve_complexes} There is
a set $\{\pi_W\colon \cuco X\rightarrow2^{C W}\mid W\in\mathfrak S\}$
of \emph{projections} sending points in $\cuco X$ to sets of diameter
bounded by some $\xi\geq0$ in the various $C W\in\mathfrak S$.
Moreover, there exists $K$ so that each $\pi_W$ is $(K,K)$--coarsely
Lipschitz.

 \item \textbf{(Nesting.)} \label{item:dfs_nesting} $\mathfrak S$ is
 equipped with a partial order $\sqsubseteq$, and either $\mathfrak
 S=\emptyset$ or $\mathfrak S$ contains a unique $\sqsubseteq$--maximal
 element; when $V\sqsubseteq W$, we say $V$ is \emph{nested} in $W$.  We
 require that $W\sqsubseteq W$ for all $W\in\mathfrak S$.  For each
 $W\in\mathfrak S$, we denote by $\mathfrak S_W$ the set of
 $V\in\mathfrak S$ such that $V\sqsubseteq W$.  Moreover, for all $V,W\in\mathfrak S$
 with $V$ properly nested in $W$ there is a specified subset
 $\rho^V_W\subset C W$ with $\mathrm{diam}_{C W}(\rho^V_W)\leq\xi$.
 There is also a \emph{projection} $\rho^W_V\colon C
 W\rightarrow 2^{C V}$.  (The similarity in notation is
 justified by viewing $\rho^V_W$ as a coarsely constant map $C
 V\rightarrow 2^{C W}$.)
 
 \item \textbf{(Orthogonality.)} 
 \label{item:dfs_orthogonal} $\mathfrak S$ has a symmetric and
 anti-reflexive relation called \emph{orthogonality}: we write $V\bot
 W$ when $V,W$ are orthogonal.  Also, whenever $V\sqsubseteq W$ and $W\bot
 U$, we require that $V\bot U$.  Finally, we require that for each
 $T\in\mathfrak S$ and each $U\in\mathfrak S_T$ for which
 $\{V\in\mathfrak S_T\mid V\bot U\}\neq\emptyset$, there exists $W\in
 \mathfrak S_T-\{T\}$, so that whenever $V\bot U$ and $V\sqsubseteq T$, we
 have $V\sqsubseteq W$.  Finally, if $V\bot W$, then $V,W$ are not
 $\sqsubseteq$--comparable.
 
 \item \textbf{(Transversality and consistency.)}
 \label{item:dfs_transversal} If $V,W\in\mathfrak S$ are not
 orthogonal and neither is nested in the other, then we say $V,W$ are
 \emph{transverse}, denoted $V\pitchfork W$.  There exists
 $\kappa_0\geq 0$ such that if $V\pitchfork W$, then there are
  sets $\rho^V_W\subseteq C W$ and
 $\rho^W_V\subseteq C V$ each of diameter at most $\xi$ and 
 satisfying: $$\min\left\{d_{
 W}(\pi_W(x),\rho^V_W),d_{
 V}(\pi_V(x),\rho^W_V)\right\}\leq\kappa_0$$ for all $x\in\cuco X$.
 
 For $V,W\in\mathfrak S$ satisfying $V\sqsubseteq W$ and for all
 $x\in\cuco X$, we have: $$\min\left\{d_{
 W}(\pi_W(x),\rho^V_W),\mathrm{diam}_{C
 V}(\pi_V(x)\cup\rho^W_V(\pi_W(x)))\right\}\leq\kappa_0.$$ 
 
 The preceding two inequalities are the \emph{consistency inequalities} for points in $\cuco X$.
 
 Finally, if $U\sqsubseteq V$, then $d_W(\rho^U_W,\rho^V_W)\leq\kappa_0$ whenever $W\in\mathfrak S$ satisfies either $V\sqsubseteq W$ and \(V \neq W\) or $V\pitchfork W$ and $W\not\bot U$.
 
 \item \textbf{(Finite complexity.)} \label{item:dfs_complexity} There exists $n\geq0$, the \emph{complexity} of $\cuco X$ (with respect to $\mathfrak S$), so that any set of pairwise--$\sqsubseteq$--comparable elements has cardinality at most $n$.
  
 \item \textbf{(Large links.)} \label{item:dfs_large_link_lemma} There
exist $\lambda\geq1$ and $E\geq\max\{\xi,\kappa_0\}$ such that the following holds.
Let $W\in\mathfrak S$ and let $x,x'\in\cuco X$.  Let
$N=\lambda d_{_W}(\pi_W(x),\pi_W(x'))+\lambda$.  Then there exists $\{T_i\}_{i=1,\dots,\lfloor
N\rfloor}\subseteq\mathfrak S_W-\{W\}$ such that for all $T\in\mathfrak
S_W-\{W\}$, either $T\in\mathfrak S_{T_i}$ for some $i$, or $d_{
T}(\pi_T(x),\pi_T(x'))<E$.  Also, $d_{
W}(\pi_W(x),\rho^{T_i}_W)\leq N$ for each $i$. 
 
 \item \textbf{(Bounded geodesic image.)} \label{item:dfs:bounded_geodesic_image} For all $W\in\mathfrak S$, all $V\in\mathfrak S_W-\{W\}$, and all geodesics $\gamma$ of $C W$, either $\mathrm{diam}_{C V}(\rho^W_V(\gamma))\leq E$ or $\gamma\cap N_E(\rho^V_W)\neq\emptyset$. 
 
 \item \textbf{(Partial Realization.)} \label{item:dfs_partial_realization} There exists a constant $\alpha$ with the following property. Let $\{V_j\}$ be a family of pairwise orthogonal elements of $\mathfrak S$, and let $p_j\in \pi_{V_j}(\cuco X)\subseteq C V_j$. Then there exists $x\in \cuco X$ so that:
 \begin{itemize}
 \item $d_{V_j}(x,p_j)\leq \alpha$ for all $j$,
 \item for each $j$ and 
 each $V\in\mathfrak S$ with $V_j\sqsubseteq V$, we have 
 $d_{V}(x,\rho^{V_j}_V)\leq\alpha$, and
 \item if $W\pitchfork V_j$ for some $j$, then $d_W(x,\rho^{V_j}_W)\leq\alpha$.
 \end{itemize}

\item\textbf{(Uniqueness.)} For each $\kappa\geq 0$, there exists
$\theta_u=\theta_u(\kappa)$ such that if $x,y\in\cuco X$ and
$d(x,y)\geq\theta_u$, then there exists $V\in\mathfrak S$ such
that $d_V(x,y)\geq \kappa$.\label{item:dfs_uniqueness}
\end{enumerate}
We often refer to $\mathfrak S$, together with the nesting
and orthogonality relations, the projections, and the hierarchy paths,
as a \emph{hierarchically hyperbolic structure} for the space $\cuco
X$.  
\end{Def}

\begin{Def}[Factor System]\label{Factor System}
	Let \(\Gamma\) be a connected graph. We say that a family \(\H\) of connected subgraphs of \(\Gamma\) is a 
	\emph{factor system} for \(\Gamma\) if there are constants \(K,c, \xi, B\) such that the following are satisfied. 
	\begin{enumerate}
	\item Each \(H \in \H\) is \(K\)-quasi-isometrically embedded in \(\Gamma\).
	\item Given \(H_1, H_2 \in \H\), either \(\mathrm{diam} (p_{H_1} (H_2)) \leq \xi\), 
	or there is \(U \in \H\) with such that \(U \subseteq H_1\) and \(d_{\mathrm{Haus}}(p_{H_1} (H_2), U ) \leq B\).
	\item Given \(H_1, H_2 \in \H\), if \(d_{\mathrm{Haus}}(p_{H_1}(H_2), H_1) \leq B\), then \(H_1 \subseteq H_2 \).
	\item Every ascending chain of inclusions \(H_1 \subsetneq H_2 \subsetneq \cdots \subsetneq H_n\) has length bounded above by \(c\).
	\item Given \(H_1, H_2 \in \H\), if \(d_{\mathrm{Haus}}(H_1, H_2) < \infty\), then \(H_1 = H_2\).
	\end{enumerate}
\end{Def}

\begin{thm}[\cite{SprianoHyperbolicHHSI}, Theorem 3.14]\label{thm: HHS structure from fact syst}
Let \(\Gamma\) be a hyperbolic graph, \(\H\) be a factor system for \(\Gamma\) and, for each \(H \in \H \cup \{\Gamma\}\), let \(\widehat{H}\) be the cone off of \(H\) with respect to all elements of \(\H\) that are strictly contained in \(H\). 
Then \(\Gamma\) admits a hierarchically hyperbolic structure, with indexing set \(\H \cup \{\Gamma\}\), and with associated hyperbolic spaces the family \(\{\widehat{H} \mid H \in \H \cup \{\Gamma\}\}\). 
The projections are realized as closest point projections in \(\Gamma\).
\end{thm}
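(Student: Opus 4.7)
The plan is to take $\S = \H \cup \{\Gamma\}$ with nesting $\sqsubseteq$ equal to subgraph containment, so that $\Gamma$ is the unique $\sqsubseteq$-maximal element. Orthogonality is declared empty, so that any two $\sqsubseteq$-incomparable elements are transverse. The hyperbolic space attached to $H \in \S$ is $\widehat{H}$, uniformly hyperbolic by Proposition \ref{Kapovich-Rafi} (each $H \in \H$ is itself hyperbolic as a QI-embedded subgraph of $\Gamma$, and the elements of $\H$ contained in $H$ are QI-embedded in $H$ by composition), and the projection $\pi_H \colon \Gamma \to 2^{\widehat{H}}$ is the closest-point projection $p_H$ followed by the inclusion $H \hookrightarrow \widehat{H}$; coarse Lipschitzness is standard. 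For $V \subsetneq W$ set $\rho^V_W$ to be the image of $V$ in $\widehat{W}$, a single cone point, and $\rho^W_V \colon \widehat{W} \to 2^{\widehat{V}}$ is defined by lifting to $W$, applying $p_V$, and including into $\widehat{V}$. For $V \pitchfork W$, take $\rho^V_W$ to be the image of $p_W(V)$ in $\widehat{W}$: by factor-system axiom~(2) of Definition \ref{Factor System}, $p_W(V)$ is either uniformly bounded in $W$ or Hausdorff close to some $U \in \H$ with $U \subseteq W$, which is coned off in $\widehat{W}$, so $\rho^V_W$ has uniformly bounded diameter in either case.

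The nontrivial verifications are consistency, bounded geodesic image, and large links. For transverse consistency with $V \pitchfork W$ and $x \in \Gamma$, if $d_{\widehat{W}}(\pi_W(x), \rho^V_W)$ is large, then $p_W(x)$ is far from $p_W(V)$ in $W$, and a Behrstock-type argument using hyperbolicity of $\Gamma$ together with quasi-convexity of $V$ and $W$ forces $p_V(x)$ to lie close to $p_V(W)$, which coarsely agrees with $\rho^W_V$ applied to any lift of $\pi_W(x)$. Nested consistency is easier: the cone point $\rho^V_W$ lifts into $V$, so $\rho^W_V \circ \pi_W$ agrees coarsely with $\pi_V$. Bounded geodesic image follows from Proposition \ref{prop: Bound Hausdorff distance}: if a geodesic $\gamma$ of $\widehat{W}$ avoids an $E$-neighborhood of $\rho^V_W$, then its embedded de-electrification is a quasi-geodesic of $W$ at bounded $W$-distance from $\gamma$ and avoiding $V$, so its closest-point projection onto $V$ has bounded diameter, and a fortiori so does its image in $\widehat{V}$.

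The main obstacle will be verifying Large Links. Given $x, x' \in \Gamma$ and $W \in \S$, I would take a $\tau_1$-quasi-geodesic $\gamma'$ in $\widehat{W}$ from $\pi_W(x)$ to $\pi_W(x')$ whose embedded de-electrification $\widetilde{\gamma}$ is a $\tau_2$-quasi-geodesic of $W$ (Proposition \ref{prop: Bound Hausdorff distance}), and let $\{T_1, \ldots, T_k\}$ be the $\H$-pieces of $\widetilde{\gamma}$; their number satisfies $k \leq \lambda d_{\widehat{W}}(\pi_W(x), \pi_W(x')) + \lambda$ for an appropriate $\lambda$. The claim is that any $T \in \S_W \setminus \{W\}$ with $d_{\widehat{T}}(\pi_T(x), \pi_T(x')) \geq E$ is nested in some $T_i$: otherwise, standard closest-point projection estimates in the hyperbolic graph $W$ would force $\widetilde{\gamma}$ to enter a neighborhood of the quasi-convex set $T$, and then factor-system axiom~(2) applied to the projections onto $T$ of the initial and terminal subsegments of $\widetilde{\gamma}$ would produce an $\H$-piece of $\widetilde{\gamma}$ nested in $T$, contradicting $T \notin \bigcup_i \S_{T_i}$. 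The bound $d_{\widehat{W}}(\pi_W(x), \rho^{T_i}_W) \leq N$ is automatic since $T_i$ appears as a piece of the chosen quasi-geodesic. Finally, finite complexity is immediate from factor-system axiom~(4); partial realization reduces, since orthogonality is empty, to a single-coordinate lift using quasi-convexity; and uniqueness follows by induction on complexity from bounded geodesic image and large links, or directly from Proposition \ref{prop: Bound Hausdorff distance} applied to $\widehat{\Gamma}$.
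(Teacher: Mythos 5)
This paper cites Theorem~\ref{thm: HHS structure from fact syst} from \cite{SprianoHyperbolicHHSI} as a black box and contains no proof of it, so there is no internal argument here to compare your proposal against; I can only evaluate your sketch on its own. Your setup is the right one (nesting as containment, empty orthogonality, \(CW = \widehat{W}\), \(\pi_W = p_W\), cone points and projected images as the \(\rho\)-sets), and Proposition~\ref{prop: Bound Hausdorff distance} is indeed the right tool for Bounded Geodesic Image.

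The genuine gap is in the Large Links verification. You argue that if \(T \sqsubseteq W\) has large \(d_{\widehat T}\)-progress and \(T \notin \bigcup_i \S_{T_i}\), then factor-system axiom~(2) applied along \(\widetilde\gamma\) "would produce an \(\H\)-piece of \(\widetilde\gamma\) nested in \(T\)", and you call this a contradiction. It is not: a piece \(T_j\) with \(T_j \sqsubseteq T\) does not contradict \(T \not\sqsubseteq T_i\) for all \(i\). What Large Links actually requires is \(T \sqsubseteq T_j\) for some \(j\), and the factor-system axioms run the wrong way for that. Axiom~(2) applied to \(p_{T_j}(T)\) gives some \(U \in \H\) with \(U \sqsubseteq T_j\) close to that projection, not \(T\) itself; axiom~(3) would give \(T_j \subseteq T\) (again the wrong inclusion) and only under a strong Hausdorff-closeness hypothesis that fellow-traveling alone does not supply. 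Promoting "a long stretch of \(\widetilde\gamma\) lies in \(N_K(T)\)" to the containment \(T \sqsubseteq T_i\) is, in my reading, the hard core of the theorem and needs a real argument you have not supplied. Separately, Uniqueness does not follow "directly" from Proposition~\ref{prop: Bound Hausdorff distance}: if \(d_\Gamma(x,y)\) is large but \(d_{\widehat\Gamma}(x,y)\) is small, a long \(\H\)-piece sits inside some \(T_i\), but the associated hyperbolic space is the \emph{cone-off} \(\widehat{T_i}\), where that progress can collapse again. So this must be run as a recursion terminating via axiom~(4), i.e.\ the induction-on-complexity route you mention as an alternative is the one that actually works.
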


There is a particular instance of factor system that we will use in Section \ref{section:Graphs of HHG}.

\begin{corollary}\label{cor: Simple Factor System}
Let \(\Gamma\) be a hyperbolic graph and let \(\H\) be a family of uniformly quasi-isometrically embedded subgraphs of \(\Gamma\). Suppose that all the elements of \(\H\) have infinite diameter. Suppose, moreover,  that for each \(\epsilon\) exists \(R = R(\epsilon)\) such that for any pair of distinct elements \(H_1, H_2\) of \(\H\), the intersection \(N_\epsilon (H_1) \cap H_2\) has diameter at most \(R\).
Then \(\H\) is a factor system for \(\Gamma\) and \((\Gamma, \H \cup \{\Gamma\})\) is a hierarchically hyperbolic space. 
\begin{proof}
The axioms 1,4 and 5 are immediate from the hypotheses. Axiom 3 follows easily from hyperbolicity of \(\Gamma\). Consider Axiom 2. We claim that there is \(\xi\) such that for each pair of elements \(H_1\) and \(H_2\) of \(\H\), we have \(\mathrm{diam}(p_{H_2}(H_1)) \leq \xi\). 
Suppose this is not the case. Since \(\Gamma\) is hyperbolic, the spaces \(H_i\) are uniformly quasi-convex. In particular, there is are constants \(K\) and \(T\) such that \(\mathrm{diam}(p_{H_2} (H_1)) - T \leq \mathrm{diam}(N_K(H_1) \cap H_2) \). But the latter is uniformly bounded by \(R(K)\).  
\end{proof}
\end{corollary}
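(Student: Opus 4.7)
The plan is to verify the five axioms of Definition \ref{Factor System} for the family $\H$ and then invoke Theorem \ref{thm: HHS structure from fact syst}. Axiom 1 is immediate from the hypothesis that the elements of $\H$ are uniformly quasi-isometrically embedded. Axioms 4 and 5 follow straightforwardly from the bounded-intersection hypothesis: if $H_1 \subsetneq H_2$ were distinct elements of $\H$, then for any $\epsilon>0$ one has $N_\epsilon(H_1)\cap H_2 \supseteq H_1$, whose infinite diameter contradicts the bound $R(\epsilon)$. Thus no strict inclusion between distinct elements of $\H$ can occur, and Axiom 4 holds with $c=1$. Analogously, if $d_{\mathrm{Haus}}(H_1,H_2)\leq\epsilon$ with $H_1\neq H_2$, then $H_1\subseteq N_\epsilon(H_2)$, forcing $N_\epsilon(H_2)\cap H_1$ to have infinite diameter and giving Axiom 5.

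The main content lies in Axiom 2. I would use that in a $\delta$-hyperbolic graph, uniformly quasi-isometrically embedded subgraphs are uniformly quasi-convex, so closest-point projections onto them are coarsely Lipschitz and coarsely well-defined. A standard hyperbolic-geometry estimate then provides constants $K$ and $T$, depending only on $\delta$ and the QI constants, such that
\[
\mathrm{diam}(p_{H_2}(H_1)) \leq \mathrm{diam}(N_K(H_1)\cap H_2) + T.
\]
By hypothesis, the right-hand side is at most $R(K)+T$, so setting $\xi := R(K)+T$ yields a uniform bound $\mathrm{diam}(p_{H_2}(H_1))\leq\xi$, which places every pair of elements of $\H$ into the first alternative of Axiom 2. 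For Axiom 3, a similar argument combines with the infinite-diameter hypothesis: if $d_{\mathrm{Haus}}(p_{H_1}(H_2),H_1)\leq B$, then $H_1$ itself lies in a bounded neighbourhood of $H_2$, whence the intersection hypothesis forces $H_1=H_2$ and in particular $H_1\subseteq H_2$.

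The only step requiring genuine care is the projection-versus-thickening-intersection inequality used for Axiom 2; the remaining axioms are essentially forced by the bounded-intersection and infinite-diameter hypotheses, and the hyperbolic-space facts invoked (uniform quasi-convexity of QI-embedded subgraphs, the coarse relationship between $p_{H_2}(H_1)$ and $N_K(H_1)\cap H_2$) are standard. Once all five axioms are verified, Theorem \ref{thm: HHS structure from fact syst} directly supplies the asserted HHS structure on $(\Gamma,\H\cup\{\Gamma\})$.
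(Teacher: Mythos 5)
Your proof is correct and follows essentially the same route as the paper: both use the standard hyperbolic-geometry inequality $\mathrm{diam}(p_{H_2}(H_1)) \leq \mathrm{diam}(N_K(H_1)\cap H_2) + T$ to bound projections for Axiom 2, and deduce Axioms 3--5 from the bounded-intersection and infinite-diameter hypotheses; you merely spell out the "immediate/easy" steps that the paper leaves implicit, and then invoke Theorem \ref{thm: HHS structure from fact syst} just as the paper does.
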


A consequence of a generalized version of Theorem \ref{thm: HHS structure from fact syst} is the following.

\begin{thm}[\cite{SprianoHyperbolicHHSI}, Corollary 5.23]\label{thm: HHG structure on G}
Let \(G\) be a hyperbolic group and let \(\F= \{F_1, \dots, F_N\}\) be a finite family of infinite quasi-convex subgroups. 
Let \(\sim\) be the equivalence relation between subset of \(G\) given by having finite distance in \(\mathrm{Cay}(G)\)  (note that does not depend on the choice of generators).
Then there exists a finite family of quasi-convex subgroups \(\F^{(M)}\) that contains \(\F\) such that if \(\F_{\mathrm{cos}}\) is the set of cosets of \(\F^{(M)}\), then \((G, \F_{\mathrm{cos}} /_\sim )\) is a hierarchically hyperbolic group structure on \(G\). 
\end{thm}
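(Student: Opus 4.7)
The plan is to build $\F^{(M)}$ so that the collection of left cosets $\F_{\mathrm{cos}}$ forms a factor system for $\mathrm{Cay}(G)$ in the sense of Definition \ref{Factor System}, after which Theorem \ref{thm: HHS structure from fact syst} yields an HHS structure indexed by $\F_{\mathrm{cos}}/_\sim \cup \{\mathrm{Cay}(G)\}$. Because $G$ acts on itself by left multiplication, permuting cosets and preserving the entire data, this HHS structure automatically promotes to an HHG structure.

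For the construction, I would set $\F^{(0)}=\F$ and iteratively let $\F^{(k+1)}$ be obtained from $\F^{(k)}$ by adjoining a conjugacy representative of each infinite intersection $F\cap gF'g^{-1}$ with $F,F'\in\F^{(k)}$ and $g\in G$. Short's theorem guarantees that these intersections remain quasi-convex, with constants depending only on those of $F$ and $F'$. The decisive finiteness input is the finite-height property of quasi-convex subgroups in hyperbolic groups (Gitik--Mitra--Rips--Sageev), which ensures that, up to conjugation, only finitely many subgroups can appear before the process stabilizes. Let $\F^{(M)}$ denote the resulting finite family.

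Next, I would verify the factor system axioms for $\F_{\mathrm{cos}}/_\sim$. Uniform quasi-isometric embedding (Axiom 1) is immediate from quasi-convexity combined with finiteness of $\F^{(M)}$. The no-redundancy condition (Axiom 5) holds by quotienting by $\sim$. The chain condition (Axiom 4) follows because a strictly ascending chain of cosets induces a strictly ascending chain of subgroups, all drawn from the finite family $\F^{(M)}$ up to conjugation. The main work goes into Axioms 2 and 3. For Axiom 2, I would apply the classical projection estimate for cosets of quasi-convex subgroups in a hyperbolic group: $p_{g_1F}(g_2F')$ is either uniformly bounded, or Hausdorff close to a coset of $F\cap g_1^{-1}g_2F'g_2^{-1}g_1$ lying inside $g_1F$. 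By construction of $\F^{(M)}$ this intersection is conjugate to an element of $\F^{(M)}$, so its coset is in $\F_{\mathrm{cos}}$, as required. Axiom 3 is similar: if the projection of $H_2$ onto $H_1$ coarsely covers $H_1$, then $H_1$ lies in a bounded neighborhood of $H_2$, which by quasi-convexity forces the containment after the $\sim$-identification.

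The main obstacle is the finiteness of the closure step, which rests on a genuine combinatorial input (finite height of quasi-convex subgroups) rather than a soft argument. One could hope to shortcut verification by invoking Corollary \ref{cor: Simple Factor System}, but this requires $\epsilon$-neighborhoods of distinct cosets to have bounded intersection, which can fail when two cosets share a common infinite sub-coset; hence direct verification of the full factor system axioms via the projection estimate above is the more robust route. Once $\F^{(M)}$ is in hand, the remainder is a systematic translation of standard coarse-geometric facts about projections onto quasi-convex subgroups in hyperbolic groups.
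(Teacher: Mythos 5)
This paper does not prove the statement itself; it quotes it as Corollary 5.23 of \cite{SprianoHyperbolicHHSI}. Your proposal reconstructs the architecture that proof must have: enlarge $\F$ by closing under (conjugates of) infinite intersections until the process stabilizes, check that the resulting family of cosets is a factor system, and then apply Theorem \ref{thm: HHS structure from fact syst}, with $G$-equivariance of the whole construction promoting the HHS structure to an HHG structure. Your remark that Corollary \ref{cor: Simple Factor System} cannot be used as a shortcut is also correct: the closed family deliberately contains nested cosets, so $\epsilon$-neighborhood intersections are unbounded and one must verify the factor-system axioms directly via projection estimates. In broad strokes you have the right proof.

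Two points are imprecise enough that they would not survive a careful write-up. First, the finiteness of the closure does not follow from finite height alone. Finite height (Gitik--Mitra--Rips--Sageev) bounds the \emph{depth} of a chain of infinite intersections of conjugates; it does not, for a fixed depth, bound the number of conjugacy classes that appear. One also needs a width/bounded-packing input: for quasi-convex $F,F'$ there are only finitely many double cosets $FgF'$ with $F\cap gF'g^{-1}$ infinite. Both finiteness statements are needed for the iteration $\F^{(k)}\mapsto\F^{(k+1)}$ to terminate with a finite family, so attributing termination to finite height alone overstates what that single lemma gives you. Second, in your Axiom 2 verification the subgroup $K:=F\cap g_1^{-1}g_2F'g_2^{-1}g_1$ is only \emph{conjugate} to some $K'\in\F^{(M)}$, say $K=hK'h^{-1}$. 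Then $g_1K=g_1hK'h^{-1}$ is not a left coset of $K'$; it is only within Hausdorff distance $|h|$ of the genuine coset $g_1hK'\in\F_{\mathrm{cos}}$. That discrepancy is absorbed by the quotient by $\sim$, but only with a \emph{uniform} constant, which again requires the bounded-packing statement (conjugators $h$ realizing the identification can be taken of uniformly bounded length). You assert ``so its coset is in $\F_{\mathrm{cos}}$'' as though this were automatic; it is the crux of why the $\sim$-quotient is part of the statement, and it should be argued explicitly.
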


\begin{corollary}[\cite{SprianoHyperbolicHHSI}, Section 5]\label{cor: sub-fac-sys Gps}
Let \(G\) be a hyperbolic group, and let \(\F= \{F_1, \dots , F_N\}\) be a finite family of quasi-convex subgroups. 
Let \((G, \S)\) be the HHG structure on \(G\) provided by Theorem \ref{thm: HHG structure on G}. Then for each \(F \in \F\), we have that \((F, \S_{[F]})\) is an HHG structure on \(F\). Moreover the inclusion map \(i \colon F \hookrightarrow G\) induces an injective hieromorphism between \(F\) and \(G\), such that  for each \([H] \in \S_{[F]}\), we have \(i^\bigcirc ([H]) = [H]\).

\end{corollary}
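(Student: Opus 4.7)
The plan is to recognize $(F, \S_{[F]})$ as the HHG structure produced by Theorem \ref{thm: HHG structure on G} applied to $F$ with a suitable finite family of quasi-convex subgroups, and to match this with the restriction of $(G, \S)$ using the standard quasi-convex calculus in hyperbolic groups.

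First I would identify $\S_{[F]}$ concretely. By the construction underlying Theorem \ref{thm: HHG structure on G}, the elements of $\S$ are $\sim$-classes of cosets of $\F^{(M)}$ in $G$, and nesting corresponds to coarse inclusion. Since $F$ itself lies in $\F^{(M)}$, the elements of $\S$ nested in $[F]$ are precisely the classes $[gH]$ with $H \in \F^{(M)}$ whose representative is at finite Hausdorff distance from a subset of $F$. Using the standard fact that intersections of quasi-convex subgroups of a hyperbolic group are quasi-convex, each such coset $gH$ can be realized as a coset inside $F$ of a conjugate of $F \cap gHg^{-1}$. The resulting (finite up to $F$-conjugacy) family $\F|_F$ of quasi-convex subgroups of $F$ is the natural input for Theorem \ref{thm: HHG structure on G}, and yields an HHG structure on $F$ whose index set is in canonical bijection with $\S_{[F]}$.

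Next I would check that the hyperbolic spaces and projections match. For $[H] \in \S_{[F]}$, the hyperbolic space in the $G$-structure is the cone-off $\widehat{H}$ with respect to cosets strictly nested inside $H$; by the previous step all such cosets already lie inside $F$, so the same cone-off is produced by the $F$-side construction. The projections in both structures are realized as closest-point projections (Theorem \ref{thm: HHS structure from fact syst}), and for a target contained in the quasi-convex subgroup $F$ the closest-point projection in $G$ and in $F$ agree up to a uniform additive error. The $\sqsubseteq$, $\bot$, $\pitchfork$ relations and the $\rho$-sets are all determined by coarse inclusion and projection data inside $F$, hence they restrict on the nose. Consequently $(F, \S_{[F]})$ is an HHG structure, and the inclusion $i \colon F \hookrightarrow G$ is an injective hieromorphism acting as the identity $i^\bigcirc([H]) = [H]$ on index sets by construction.

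The main obstacle is the coarse-agreement of projections: verifying that the closest-point projection from $F$ onto a cone-off $\widehat{H} \subseteq \widehat{F}$ coincides, up to a uniform additive error, with the restriction of the analogous projection in $\widehat{G}$. This reduces to the classical fact that in a hyperbolic space, closest-point projection onto a quasi-convex subset of a quasi-convex subset factors through the intermediate set with bounded error, combined with the de-electrification estimate of Proposition \ref{prop: Bound Hausdorff distance} used to transfer the statement through the cone-offs. Once this is in hand, every HHS axiom for $(F, \S_{[F]})$ is an immediate restriction of the corresponding axiom for $(G, \S)$.
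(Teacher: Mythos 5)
Your proposal hinges on the claim that the index set $\S_{[F]}$ ``is in canonical bijection with'' the index set produced by running Theorem~\ref{thm: HHG structure on G} afresh on $F$ with the intersected family $\F|_F$. This is where the argument has a genuine gap. Theorem~\ref{thm: HHG structure on G} does not output a factor system directly from the input family $\F$; it first enlarges $\F$ to a closed family $\F^{(M)}$ by an iterative saturation process, and the HHG index set consists of $\sim$-classes of cosets of $\F^{(M)}$. Nothing in your argument shows that saturating $\F|_F$ inside $F$ recovers exactly the set of $\sim$-classes of cosets of $\F^{(M)}$ nested under $[F]$. In one direction, intersecting with $F$ and then closing could a priori produce new subgroups that do not come from elements of $\F^{(M)}$; in the other, an element $gH \sqsubseteq [F]$ (coarsely contained in $F$ up to $\sim$) need not literally be a coset inside $F$, only at bounded Hausdorff distance from one, and you do not verify that the $\sim$-class you land in after intersecting is the same one. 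You also need $\F|_F$ to consist of \emph{infinite} quasi-convex subgroups to invoke Theorem~\ref{thm: HHG structure on G}; this is true here (a coset of infinite diameter at bounded Hausdorff distance from a subset of $F$ forces the intersection to be infinite, by properness), but it should be said.

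A cleaner route that avoids the bijection entirely: work at the level of factor systems. If $\H$ is a factor system for a hyperbolic graph $\Gamma$ and $H_0 \in \H$, then $\{H \in \H : H \subsetneq H_0\}$ satisfies the factor-system axioms for $H_0$ itself, since axioms (2)--(3) are stated in terms of closest-point projections and Hausdorff distances, and closest-point projection onto a quasi-convex subset of the quasi-convex $H_0$ agrees, up to uniform additive error, whether computed in $\Gamma$ or in $H_0$ (this is the classical fact you invoke in your last paragraph). Then Theorem~\ref{thm: HHS structure from fact syst} applied to $(H_0, \H|_{H_0})$ produces exactly the restricted structure: the cone-offs $\widehat{H}$ for $H \sqsubseteq H_0$ are literally the same spaces, the relations restrict on the nose, and the $\rho$-maps and projections match up to uniform error by the projection-factoring fact together with Proposition~\ref{prop: Bound Hausdorff distance} to pass through the cone-offs. $F$-equivariance upgrades this to an HHG structure, and the inclusion tautologically induces the identity on the index set. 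Your intuition that ``every HHS axiom is an immediate restriction'' is essentially right; the point is that this should be the proof, rather than a consequence of an unestablished identification with a second application of Theorem~\ref{thm: HHG structure on G}.
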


\section{Hierarchical quasi-convexity}\label{sec: Hierarchical quasi-convexity}
We conclude this section providing a result that will be useful in Section \ref{section:Graphs of HHG}, namely that for a hyperbolic graph \(\Gamma\) equipped with the HHS structure coming from a factor system, quasi-convex subgraphs of \(\Gamma\) are  \emph{hierarchically quasi-convex}. The notion of hierarchical quasi-convexity was introduced by Behrstock, Hagen and Sisto and has deep consequences (see Sections 5,6,7 of \cite{HHSII}).

 \begin{Def}[Hierarchical quasi-convexity, \cite{HHSII} Definition 5.1]\label{def:hierarchically_q.c.}
Let \((\X, \S)\) be a hierarchically hyperbolic space. We say that a subset \(Y \subset \MCG\) is \emph{\(k\)-hierarchically quasi-convex}, for some \(k \colon [0,\infty) \rightarrow [0, \infty)\), if the following holds:
\begin{enumerate}
\item For each \(U \in \S\), one has that \(\pi_U(Y)\) is \(k(0)\)-quasi-convex in \(CU\).
\item For every \(r \geq 0\) and every \(x \in \X\) such that \(d_{CU}(\pi_U(x), \pi_U(Y)) \leq r\) for all \(U \in \S\), one has that \(d_\X (x, Y) \leq k(r)\).
\end{enumerate} 
\end{Def}

We recall the Distance Formula for hierarchically hyperbolic spaces

\begin{thm}[Distance formula, \cite{HHSII} Theorem 4.5]\label{thm:Distance_formula}
Let \((\X, \S)\) be hierarchically hyperbolic. Then there exists \(s_0\) such that for all \(s \leq s_0\)  there exists constants \(K, C\) such that for all \(x,y \in \X\),
\[d_{\X} (x,y) \asymp_{(K,C)} \sum_{W \in \S} \left\{\left\{ d_{CW}(\pi_W(x), \pi_W(y))\right\} \right\}_s,\]
where \(\{\{A\}\}_s\) is equal to \(0\) if \(A<s\), and equal to \(A\) otherwise. 
\end{thm}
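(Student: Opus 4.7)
The plan is to prove the two directions of the quasi-equality separately. For the upper bound on the sum in terms of $d_\X(x,y)$, the starting point is that each projection $\pi_W$ is $(K,K)$--coarsely Lipschitz by axiom \eqref{item:dfs_curve_complexes}, so each single term $d_{CW}(\pi_W(x),\pi_W(y))$ is at most $Kd_\X(x,y)+K$. The real work is showing that only boundedly many domains contribute a term exceeding the threshold $s$. Here I would iterate the Large Links axiom \eqref{item:dfs_large_link_lemma}: starting from the maximal element $S\in\S$, the axiom produces roughly $\lambda d_{CS}(\pi_S(x),\pi_S(y))+\lambda$ nested sub-domains that account for every other $T\sqsubseteq S$ with $d_{CT}(\pi_T(x),\pi_T(y))\geq E$. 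Iterating downward $n$ times (where $n$ is the complexity from \eqref{item:dfs_complexity}) and keeping track of the sizes at each stage, I would obtain that the total of all relevant projections is controlled by a linear function of $d_\X(x,y)$, provided $s\geq E$.

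The lower bound — bounding $d_\X(x,y)$ from above by the sum — is the substantive half and requires first establishing a \emph{realization theorem}: any tuple $(b_W)_{W\in\S}$ that is "consistent" (satisfying axiom \eqref{item:dfs_transversal} up to a constant) is coarsely realized by some $x\in\X$, with realization constant depending only on the HHS constants. I would prove this by induction on complexity, using partial realization (axiom \eqref{item:dfs_partial_realization}) at the maximal domain to produce a first approximation, then correcting within each maximal pairwise-orthogonal family using the inductive hypothesis on the smaller-complexity sub-HHS. Bounded geodesic image \eqref{item:dfs:bounded_geodesic_image} guarantees that the corrections do not spoil projections on transverse or higher domains.

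With realization in hand, the strategy is to construct a \emph{hierarchy path} from $x$ to $y$. The skeleton of the construction is: pick a geodesic $\gamma_S$ in the maximal domain $CS$ from $\pi_S(x)$ to $\pi_S(y)$, discretize it, and lift each vertex to a point in $\X$ via partial realization; then concatenate, between consecutive lifts, hierarchy paths produced inductively inside the nested sub-HHSs indexed by $\mathfrak S_T - \{T\}$ for the $T_i$ supplied by Large Links. Summing the length estimates along the way, and invoking bounded geodesic image to ensure that only projections larger than $s$ can genuinely add to the total length, one gets $d_\X(x,y)\lesssim \sum_W\{\{d_{CW}(\pi_W(x),\pi_W(y))\}\}_s$ for all $s$ above a threshold $s_0$ depending on $E,\lambda,\kappa_0,n$.

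The main obstacle I expect is the inductive construction of hierarchy paths with controlled length: one needs to ensure that the "gluing" between the pieces inside different sub-domains does not inflate length beyond the projection terms already counted, which forces a careful bookkeeping using bounded geodesic image and the consistency inequalities together. A secondary technicality is the correct choice of threshold $s_0$: it must simultaneously dominate $E$, the realization constant, and the additive error coming from each application of Large Links, so that below $s_0$ the statement can genuinely fail.
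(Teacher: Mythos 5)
This theorem is not proved in the paper: it is quoted verbatim (up to a typo) as Theorem 4.5 of \cite{HHSII} and used as a black box, so there is no ``paper's own proof'' to compare against. Your sketch does, however, outline the actual strategy of the cited reference: realization by induction on complexity via partial realization and bounded geodesic image, then existence of hierarchy paths, then the distance formula as a consequence.

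Two remarks. First, the paper's statement contains a sign error --- it should read $s \geq s_0$, not $s \leq s_0$; the threshold $s_0$ is a \emph{lower} bound on the admissible truncation constants, since for small $s$ the right-hand side can pick up arbitrarily many small contributions. Your sketch implicitly uses the correct orientation (``provided $s\geq E$'', ``$s_0$ depending on $E,\lambda,\kappa_0,n$''), so you have the right picture even though you did not flag the discrepancy.

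Second, the upper-bound direction as you describe it has a gap. Iterating Large Links to depth $n$ (the complexity) and bounding \emph{both} the count of contributing domains and each individual term by something linear in $d_\X(x,y)$ would, by multiplication, yield a bound that is polynomial of degree up to $n+1$ in $d_\X(x,y)$, not linear. The actual argument in \cite{HHSII} is more delicate: one has to control the \emph{sum} of the projection distances recursively, not just the number of summands times a uniform bound on each. Concretely, the $T_i$ produced by Large Links at a given level have $\rho^{T_i}_W$ spread roughly along a geodesic in $CW$, so the nested sums do not blow up multiplicatively; the bookkeeping that justifies this is precisely what makes that half of the theorem nontrivial. If you intend to write this out, that is the step that needs a sharper formulation than ``the total of all relevant projections is controlled by a linear function of $d_\X(x,y)$.''
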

 
\begin{Def}[Hierarchy path, \cite{HHSII} Definition 4.2]
Let \((\X, \S)\) be a hierarchically hyperbolic space. A path \(\gamma\) of \(\X\) is a \emph{D-hierarchy path} if 
\begin{enumerate}
\item \(\gamma\) is a \((D,D)\)-quasi-geodesic of \(\X\);
\item for each \(U \in \S\) the projection \(\pi_U(\gamma)\) is a \((D, D)\)-quasi-geodesic.
\end{enumerate}
\end{Def}

\begin{thm}[Existence of hierarchy paths, \cite{HHSII} Theorem 4.4]\label{thm: Existence of hierarchy paths}
Let \(\X\) be a hierarchically hyperbolic space. 
Then there exists \(D_0\) so that any \(x,y \in \X\) are joined by a \(D_0\)-hierarchy path.
\end{thm}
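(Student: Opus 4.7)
The plan is to build the hierarchy path via a Realization Theorem (producing a point in $\X$ from a consistent tuple of projections) combined with the Distance Formula from Theorem~\ref{thm:Distance_formula}. Specifically, for each domain $U \in \S$ fix a geodesic $\alpha_U \colon [0, L_U] \to CU$ from $\pi_U(x)$ to $\pi_U(y)$, parameterized by arc length, where $L_U = d_U(\pi_U(x), \pi_U(y))$. The goal is to interpolate between $x$ and $y$ by selecting, for each time $t$, a consistent target in each $CU$ along $\alpha_U$, and then realizing this tuple in $\X$.

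The first step is to parameterize the tuples sensibly. Naively setting $b_U^t = \alpha_U(tL_U)$ fails because the $L_U$ can be very different and the resulting tuple need not be consistent. Instead, I would discretize: choose a scale $s$ larger than the threshold of the Distance Formula, order the domains $U$ with $L_U \geq s$ in a way compatible with the partial order $\sqsubseteq$ and the transversality relation, and process them one at a time. Between consecutive times $t_i$ and $t_{i+1}$ the tuple advances by a bounded amount along $\alpha_{U_i}$ in a currently-active domain $U_i$, and is updated on all other domains in the minimal way forced by consistency and the bounded geodesic image axiom.

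The second step is verifying consistency of the discrete tuples $(b_V^{t_i})_V$ at each stage, i.e.\ the transverse and nested inequalities from Definition~\ref{defn:space_with_distance_formula}. The key tool is bounded geodesic image: if a geodesic in $CU$ stays far from $\rho^V_U$, then the projection to $CV$ moves only boundedly. This is exactly what permits advancing along $\alpha_{U_i}$ without disturbing the projections to unrelated domains. The main technical obstacle is the bookkeeping: each step may cause controlled shifts in projections transverse to $U_i$, and one must ensure these do not accumulate beyond uniform consistency constants. Finite complexity bounds the length of any chain of nested updates, which keeps the accumulation finite.

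Finally, apply the Realization Theorem to each consistent tuple to obtain a point $\gamma(i) \in \X$ whose projection to each $CU$ is uniformly close to $b_U^{t_i}$, and concatenate these points by short paths in $\X$ to produce $\gamma$. The Distance Formula then yields that $\gamma$ is a $(D_0,D_0)$-quasi-geodesic, because its increments match, up to bounded error, the terms of the formula. By construction $\pi_U(\gamma)$ tracks the geodesic $\alpha_U$ at bounded Hausdorff distance in $CU$, so it is itself a $(D_0,D_0)$-quasi-geodesic, with $D_0$ depending only on the HHS parameters.
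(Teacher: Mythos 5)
This theorem is cited from \cite{HHSII} and the paper contains no proof of it; I will assess your attempt against the cited source. The ingredients you invoke --- the Realization Theorem (which is not stated in this paper; it is Theorem 3.1 of \cite{HHSII}), bounded geodesic image, finite complexity, and the distance formula --- are indeed the right tools, and the plan of realizing a family of consistent tuples that interpolate between $\pi_U(x)$ and $\pi_U(y)$ is the correct spirit.

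However, the sketch has genuine gaps. First, ``order the domains ... in a way compatible with $\sqsubseteq$ and the transversality relation'' is not well-defined: transversality is symmetric and carries no intrinsic order. What is actually needed is the \emph{time order} on the relevant domains determined by the consistency inequalities (for relevant transverse $U$, $V$, whether $\rho^U_V$ lies near the $\pi_V(x)$-end or the $\pi_V(y)$-end of $\alpha_V$ decides which comes first); proving this yields a partial order compatible with nesting is a substantive lemma you do not state. Second, the assertion that the induced shifts ``do not accumulate beyond uniform consistency constants'' is the technical heart of the argument, and finite complexity alone does not give it --- the Large Link axiom is also needed to bound how many domains receive a nontrivial update at each step. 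Third, and most importantly, quasi-geodesicity of $\gamma$ and of each $\pi_U(\gamma)$ does not follow from the distance formula applied to endpoints: a hierarchy path requires each $\pi_U(\gamma)$ to be an \emph{unparametrized quasi-geodesic}, i.e.\ to move monotonically along $\alpha_U$, and your update rule ``in the minimal way forced by consistency'' is not shown to be monotone in the non-active domains. Tracking $\alpha_U$ at bounded Hausdorff distance is strictly weaker than being a $(D_0,D_0)$-quasi-geodesic, so the final step needs a direct monotonicity argument built into the construction, not an appeal to the distance formula.
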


We can now state the main result of this section.

\begin{prop}[Equivalence of quasi-convexity and hierarchical quasi-convexity]\label{prop: Y is hierarchically quasi-convex}
Let \((\X, \S)\) be a hyperbolic hierarchically hyperbolic space, and let \(\Y\) be a subspace of \(\X\). Then there are functions \(Q\) and \(K\) such that the following holds. If \(\Y\) is \(q\)-quasi-convex in \(\X\), then \(\Y\) is \(K(q)\)-hierarchically quasi convex in \((\X, \S)\). Conversely, if \(\Y\) is \(k\)-hierarchically quasi-convex in \((\X, \S)\), then \(\Y\) is \(Q(k)\)-quasi-convex in \(\X\).
\begin{proof}
The fact that hierarchical quasi-convexity implies quasi-convexity is an easy consequence of Theorems \ref{thm: Existence of hierarchy paths} and \ref{thm:Distance_formula}.
Suppose that \(\Y\) is hierarchically \(k\)-quasi-convex for some \(k \colon \Int \rightarrow \Int\), let \(a, b\) be two points of \(\Y\), and let \([a,b]\) be a geodesic of \(\X\) connecting them. Let \(\gamma\) be a hierarchy path connecting \(a\) and \(b\), whose existence is guaranteed by Theorem \ref{thm: Existence of hierarchy paths}. Then, using the fact that the spaces \(CU\) are hyperbolic, we have that for each \(s \in \gamma\) and \(U \in \S\), we can uniformly bound the distance \(d(\pi_U(s), \pi_U(\Y))\). By the distance formula (Theorem \ref{thm:Distance_formula}), we can uniformly bound the distance between \(s\) and \(\Y\). Since \(\gamma\) is a quasi-geodesic, it has uniformly bounded Hausdorff distance from \([a,b]\), thus the result follows.

For the other implication, recall that hierarchically quasi-convexity consists of two parts. 
\textit{For each \(U \in \S\), the spaces \(\pi_U(\Y)\) are uniformly quasi-convex.}
Let \(x', y'\) be two points in \(\pi_U(\Y)\) and let \(x,y \in \Y\) be such that \(x' \in\pi_U(x)\) and \(y' \in \pi_U(y)\). Let \(\gamma\) be a hierarchy path between \(x\) and \(y\), whose existence is guaranteed by Theorem \ref{thm: Existence of hierarchy paths}.
Since \(\X\) is hyperbolic and \(\Y\) is quasi-convex, each point of \(\gamma\) has uniformly bounded distance from \(\Y\). Since projections are uniformly quasi-Lipschitz, this is still true for \(\pi_U(\gamma)\) and \(\pi_U(\Y)\). Since every \(CU\) is uniformly hyperbolic, the claim follows.

\textit{There is a function \(k\) such that if \(d_{CU}(x,\Y)\leq r\) for each \(U \in \S\), then \(d_{\X}(x,y)\leq k(r)\).}
Let \(x \in \X\) be such that for each \(U \in \S\), one has \(d_{CU}(x, \Y) \leq r\), and let \(y \in \Y\) such that \(d_{\X}(x,y)\) is minimal. 
If there is \(\psi (r)\) such that for all \(U \in \S\) one has \(d_{CU}(x,y)\leq \psi(r)\), then the result will follow form the distance formula (Theorem \ref{thm:Distance_formula}).
Choose \(U\) and let \(z\) be a point of \(\Y\) such that \(d_{CU}(x,z) = d_{CU}(x,\Y)\). We want to uniformly bound \(d_{CU}(y,z)\), which will imply the claim. 

Let \([x,y]\) be a geodesic of \(\X\) between \(x\) and \(y\). Since \([x,y]\) is a projection geodesic and \(\Y\) is quasi-convex in \(\X\) there is a point \(s' \in [x,z]\) that has uniformly bounded distance from \(y\).
Let \( \gamma\) be a hierarchy path of \(\X\) between \(x\) and \(y\).  Since \(\X\) is hyperbolic, the Hausdorff distance between \([x,z]\) and the hierarchy path \(\gamma\) is uniformly bounded. Thus there is a point \(s \in \gamma\) that has uniformly bounded distance from \(y \) in \(\X\). 
We remark that the uniform bound does not depend on \(x\) or \(U \in \S\). 

Since the projection \(\pi_U\) is quasi-Lipschitz, we can uniformly bound \(d_{CU} (s, y)\)by some uniform \(c\). Since \(\gamma\) is a hierarchy path, \(s\) lies at uniformly bounded distance \(\Lambda\) from a geodesic \([x,z]\) of \(CU\). Since \([x,z]\) is a projection geodesic, we get \(d_{CU}(y,z) \leq 2 (\Lambda + c)\). 
\end{proof}
\end{prop}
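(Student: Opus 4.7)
The plan is to establish the two implications separately. I expect the direction from hierarchical quasi-convexity to ordinary quasi-convexity to be a fairly direct consequence of the existence of hierarchy paths (Theorem \ref{thm: Existence of hierarchy paths}) and the distance formula (Theorem \ref{thm:Distance_formula}), while the converse requires me to orchestrate the interplay between the geometry of \(\X\) and that of each \(CU\) more carefully.

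For \emph{hierarchically \(k\)-quasi-convex \(\Rightarrow\) quasi-convex}, I take \(a,b\in\Y\), a geodesic \([a,b]\) in \(\X\), and a \(D_0\)-hierarchy path \(\gamma\) from \(a\) to \(b\). For each \(U\in\S\), the projection \(\pi_U(\gamma)\) is a uniform quasi-geodesic of \(CU\) joining two points of \(\pi_U(\Y)\); since \(\pi_U(\Y)\) is \(k(0)\)-quasi-convex in the hyperbolic space \(CU\), every point of \(\pi_U(\gamma)\) lies uniformly close to \(\pi_U(\Y)\). Clause (2) of Definition \ref{def:hierarchically_q.c.} then gives a uniform bound on \(d_\X(s,\Y)\) for each \(s\in\gamma\), and since \(\gamma\) is a uniform quasi-geodesic in the hyperbolic \(\X\), the Morse lemma places \([a,b]\) within uniform Hausdorff distance of \(\gamma\), so every point of \([a,b]\) is uniformly close to \(\Y\).

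For the converse, I split the target condition into its two parts. For quasi-convexity of each \(\pi_U(\Y)\) in \(CU\), I pick \(x',y'\in\pi_U(\Y)\), lift to \(x,y\in\Y\), and join them by a hierarchy path \(\gamma\). By \(q\)-quasi-convexity of \(\Y\) and the Morse lemma in \(\X\), every point of \(\gamma\) is uniformly close to \(\Y\); coarse Lipschitzness of \(\pi_U\) transports this to the statement that \(\pi_U(\gamma)\) is uniformly close to \(\pi_U(\Y)\) in \(CU\). Since \(\pi_U(\gamma)\) is a uniform quasi-geodesic from \(x'\) to \(y'\) in the hyperbolic \(CU\), any geodesic \([x',y']\) of \(CU\) has uniformly bounded Hausdorff distance from \(\pi_U(\gamma)\), hence from \(\pi_U(\Y)\).

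For the distance-control clause, let \(x\in\X\) satisfy \(d_{CU}(\pi_U(x),\pi_U(\Y))\le r\) for every \(U\in\S\), and let \(y\in\Y\) realize \(d_\X(x,\Y)\). By the distance formula it suffices to produce \(\psi(r)\) with \(d_{CU}(\pi_U(x),\pi_U(y))\le \psi(r)\) uniformly in \(U\). Fix \(U\), pick \(z\in\Y\) with \(d_{CU}(\pi_U(x),\pi_U(z))\le r\). The geometric engine is the standard fact that, in a hyperbolic space, the geodesic from an external point to a point of a quasi-convex subset passes uniformly close to the nearest-point projection: hence \([x,z]\) in \(\X\) has a point uniformly close to \(y\). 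I would then take a hierarchy path \(\gamma\) from \(x\) to \(z\) (uniformly Hausdorff close to \([x,z]\) in \(\X\)), find \(s\in\gamma\) close to \(y\), project to obtain \(\pi_U(s)\) close to \(\pi_U(y)\) by coarse Lipschitzness, and use that \(\pi_U(\gamma)\) is a uniform quasi-geodesic shadowing a geodesic \([\pi_U(x),\pi_U(z)]\) of length at most \(r\) to conclude \(d_{CU}(\pi_U(x),\pi_U(y))\le r+\mathrm{const}\). The delicate point I foresee is precisely this last transfer: the natural hierarchy path is between \(x\) and \(z\), not between \(x\) and \(y\), and threading the nearest-point information about \(y\) through a path joining \(x\) to \(z\) while keeping all constants uniform in \(U\) is where the argument must be done carefully.
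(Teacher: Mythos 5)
Your proposal is correct and follows essentially the same route as the paper's proof, including the key move in the distance-control clause of joining \(x\) to a point \(z\in\Y\) that is \(CU\)-close to \(x\) by a hierarchy path and using that this path passes near the nearest-point projection \(y\). You have in fact silently corrected a couple of apparent typos in the paper's own write-up (where the hierarchy path is said to run from \(x\) to \(y\) rather than from \(x\) to \(z\), and a geodesic \([x,y]\) should read \([x,z]\)), so your version is if anything the cleaner one.
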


\section{Graphs of Hierarchically Hyperbolic Groups}\label{section:Graphs of HHG}
The goal of this section is to apply the constructions of the previous sections in order to get an HHG structure on certain fundamental groups of graphs of groups. Indeed, the Combination Theorem for HHG (see \cite{HHSII}) yields sufficient conditions for the fundamental group of a graph of groups to be an HHG.
We will check that these conditions are verified for a class of graphs of groups. 
The key step will be to show that given a finite family of hyperbolic HHGs \(\{(H_i, \H_i)\}_{i\in I}\) that is hierarchically hyperbolic embedded into an HHG \((\MCG, \S)\), then it is possible to modify the HHG structure on \(\MCG\) in such a way that the various inclusions \(\phi_i: H_i \hookrightarrow \MCG\) become homomorphisms of HHG.F

We will need some definitions introduced in \cite{HHSII}. We refer to \cite{HHSII} for additional details. 

\subsection{Definitions and notations}\label{subsec:definitions and notations}

\subsubsection{Morphism of hierarchically hyperbolic spaces}

\begin{Def}[Hieromorphism, \cite{HHSII} Definition 1.19]
Let $(\X,\mathfrak S)$ and $(\X',\mathfrak S')$ be
hierarchically hyperbolic structures on the spaces $\X,\X'$
respectively.  A \emph{hieromorphism}, 
consists of a map $f\colon \X\rightarrow\X'$, an injective map $f^\bigcirc \colon\mathfrak S\rightarrow\mathfrak
S'$ preserving nesting, transversality, and orthogonality, and maps
$f^* (U)\colon C U\rightarrow C(f^\bigcirc (U))$, for each \(U \in \S\),
which are uniformly quasi-isometric embeddings.
The three maps should preserve the structure of the hierarchically hyperbolic space, that is, they coarsely commute with the maps \(\pi_U\) and \(\rho^U_V\), for \(U, V\) in either \(\S\) or \(S'\), associated to the hierarchical structures.
\end{Def}

\begin{Def}[Automorphism, hierarchically hyperbolic group, \cite{HHSII} Definition 1.20]\label{defn:hhs_automorphism}
An \emph{automorphism} of the hierarchically hyperbolic space 
$(X,\mathfrak S)$ is a hieromorphism 
$f\colon (\X,\mathfrak S)\rightarrow(\X,\mathfrak S)$ such   
that $f^\bigcirc$ is bijective and each $f^*(U)$ is an 
isometry.

The finitely generated group $\MCG$ is \emph{hierarchically hyperbolic}
if there exists a hierarchically hyperbolic space $(\X,\mathfrak
S)$ on which \(\MCG\) acts by automorphisms of hierarchically hyperbolic spaces,  so that the uniform quasi-action of $G$ on $\X$ is metrically proper and cobounded
and $\mathfrak S$ contains finitely many $\MCG$--orbits.  Note that if
$\MCG$ is hierarchically hyperbolic by virtue of its action on the
hierarchically hyperbolic space $(\X,\mathfrak S)$, then
$(\MCG,\mathfrak S)$ is a hierarchically hyperbolic structure with
respect to any word-metric on $\MCG$; for any $U\in\mathfrak S$ the 
projection is the
composition of the projection $\X\rightarrow C
U$ with a $\MCG$--equivariant quasi-isometry
$\MCG\rightarrow\X$.  In this case, $(\MCG,\mathfrak S)$ (with the
implicit hyperbolic spaces and projections) is a \emph{hierarchically
hyperbolic group structure}.
\end{Def}

\begin{Def}[Full hieromorphim, \cite{HHSII} ]\label{def:full}
We say that a hieromorphism is \emph{full} if
\begin{rules}
 \item there exists $\xi\geq 0$ such that each $f^*(U)\colon C
 U\to C(f^\bigcirc (U))$ is a $(\xi,\xi)$--quasi-isometry, and
 \item for each $U\in\mathfrak S$, if $V'\in\mathfrak S'$ satisfies
 $V'\sqsubseteq f^\bigcirc (U)$, then there exists $V\in\mathfrak S$ such that
 $V\sqsubseteq U$ and $f^\bigcirc (V)=V'$.
\end{rules}  

As the functions $f, 
f^*(U),$ and $f^\bigcirc S$ all have distinct domains, it is 
often clear from the 
context which is the relevant map; in that case we periodically abuse 
notation slightly by dropping the superscripts and just calling all of the maps $f$.

\end{Def}

\begin{rmk}
Observe that the second item of the definition of full hieromorphism holds automatically unless $V'$ is bounded.
\end{rmk}

\begin{Def}[Homomorphism of hierarchically hyperbolic groups, \cite{HHSII} Def 1.21]\label{defn:homomorphism_of_HHGs}
Let $(\MCG,\mathfrak S)$ and $(\MCG',\mathfrak S')$ be
hierarchically hyperbolic groups. A \emph{homomorphism of hierarchically hyperbolic groups} consists of a hieromorphims \(\phi \colon \MCG \rightarrow \MCG'\) such that \(\phi\) is a homomorphism of group and the hieromorphism maps \(\phi^*, \phi^\bigcirc\) are uniformly coarsely-equivariant with respect to the homomorphism \(\phi\). 
\end{Def}

\subsubsection*{Hyperbolically embedded subgroups}

We recall the definition of hyperbolically embedded subgroups that was originally introduced by Dahmani, Guirardel and Osin in \cite{DGO}.

Let \(G\) be a group and \(\{H_i\}_{i\in I}\) be a family of subgroups of \(G\). Let \(T\) be a (possibly infinite) set of elements of \(G\) and let \(F=\bigsqcup_{i \in I} \left( H_i - \1\right)\). Suppose that \(T\) is symmetric (that is for each \(t \in T\), we have that \(t^{-1} \in T\)) and that \(\left< T \cup F \right> = G\). 
For each \(i \in I\), the Cayley graph \(\mathrm{Cay}(H_i, H_i - \{\1\})\) is a complete subgraph of \(\mathrm{Cay}(G, T \cup F)\). Let \(E(H_i)\) denote the set of edges of the Cayley graph of \(H_i\). 
\begin{Def}[\cite{DGO} Definition 4.2] 
For each \(i \in I\), we define the \emph{relative metric} \(\widehat{d}_i: H_i \times H_i \rightarrow [0, +\infty]\) as the length of the shortest path in \(\mathrm{Cay}(G, T \cup F) - E(H_i)\). If such a path does not exist, then we set the length to be \(+ \infty\).
\end{Def}

\begin{Def}[Hyperbolically embedded subgroups, \cite{DGO} Definition 4.25]
The family \(\{H_i\}_{i \in I}\) is \emph{hyperbolically embedded in \(G\)} with respect to the set \(T\), and denoted by \(\{H_i\} \hookrightarrow_h (G,T)\) if:
\begin{rules}
\item the graph \(\mathrm{Cay}(G, T \sqcup H)\) is hyperbolic;
\item for each \(i \in I\), the metric space \((H_i, \widehat{d}_i)\) is proper.
\end{rules}
We say that the family \(\{H_i\}\) is \emph{hyperbolically embedded in \(G\)} if there is a set \(X\) such that \(\{H_i\} \hookrightarrow_h (G, X)\). 
\end{Def}

\begin{Def}[Hierarchically hyperbolically embedded subgroups, \cite{HHSAsdim2015}]
Let \((\MCG, \S)\) be an HHG, let \(S\) be the \(\sqsubseteq\)-maximal element of \(\S\) and let \((\{H_i\}_{i\in I})\) be a finite family of subgroups of \(\MCG\). 
We say that the family \(\{H_i\}\) is \emph{hierarchically hyperbolically embedded} into \(\MCG\), and denote this by \(\H \hookrightarrow_{hh} \MCG\), if the following hold:
\begin{rules}
\item There is a (possibly infinite) generating set \(T\) of \(\MCG\) such that \(CS = \mathrm{Cay}(\MCG, T)\) and \(\pi_S\) is the inclusion.
\item \(T\cap H_i\) generates \(H_i\).
\item \({H_i}\) is hyperbolically embedded in \((\MCG, T)\). 
\end{rules}
\end{Def}

We remark that, by the definition of hierarchically hyperbolic group, it is always possible to find a set \(T\) such that the first two conditions are satisfied. The focus of the definition is that the family \(\{H_i\}\) is hyperbolically embedded with respect to that particular set \(T\). 

We recall some properties of hyperbolically embedded subgroups.
\begin{lemma}[\cite{AntolinSistoEndomorphismsAcyl}, Lemma 3.1]\label{lem: hyp emb -> QI emb}
If \(\{H_i\} {\hookrightarrow}_{h} (G,T)\) is a hyperbolically embedded family of subgroups, then for each \(i\) there is a finite set of generators \(Q_i\) of \(H_i\) such that \((H_i, d_{Q_i})\) is quasi-isometrically embedded in \((G, d_{T})\).
\end{lemma}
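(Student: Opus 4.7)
The plan is to take \(Q_i\) to be a ball in the relative metric around the identity: set \(Q_i := \{h \in H_i \setminus \{\1\} : \widehat{d}_i(\1,h) \leq N\}\) for a constant \(N\) to be determined. Properness of \((H_i, \widehat{d}_i)\) immediately guarantees \(|Q_i| < \infty\). The easy half of the quasi-isometric embedding is automatic: each \(q \in Q_i\) has some finite \(d_T\)-length, so, bounding the maximum length \(L\) over the finite set \(Q_i\), any \(h \in H_i\) written as a \(Q_i\)-product of length \(k\) satisfies \(d_T(\1,h) \leq Lk\), yielding \(d_T(\1,h) \leq L \cdot d_{Q_i}(\1,h)\).

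For the reverse inequality (which also yields generation), I would fix \(h \in H_i\) and a \((G,d_T)\)-geodesic \(\1 = g_0, g_1, \ldots, g_n = h\), which is simultaneously a length-\(n\) path in \(\mathrm{Cay}(G, T \sqcup F)\). A standard consequence of hyperbolic embedding, from \cite{DGO}, is that \(H_i\) is quasi-convex in the hyperbolic graph \(\mathrm{Cay}(G, T \sqcup F)\); hence the closest-point projections \(p_j \in H_i\) of each \(g_j\) are well defined up to bounded error, with \(p_0 = \1\), \(p_n = h\), and \(d_{T \sqcup F}(p_j, p_{j+1}) \leq C_1\) for a uniform constant. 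Upgrading this (see below) to \(\widehat{d}_i(p_j, p_{j+1}) \leq N\), telescoping gives
\[
h = (p_0^{-1}p_1)(p_1^{-1}p_2)\cdots(p_{n-1}^{-1}p_n),
\]
a product of at most \(n = d_T(\1,h)\) elements of \(Q_i\). This simultaneously shows that \(Q_i\) generates \(H_i\) and that \(d_{Q_i}(\1,h) \leq d_T(\1,h)\), completing the quasi-isometric embedding.

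The main obstacle is the last bootstrap: converting \(d_{T \sqcup F}\)-closeness of consecutive projections into \(\widehat{d}_i\)-closeness, i.e.\ producing paths in \(\mathrm{Cay}(G, T \sqcup F) - E(H_i)\) of uniformly bounded length between \(p_j\) and \(p_{j+1}\). The point is that any \(H_i\)-edge appearing inside a short \(T \sqcup F\)-path between two elements of \(H_i\) joins two points of \(H_i\) that are themselves connectable by a short \(\widehat{d}_i\)-path, and a surgery/shortening argument based on the hyperbolicity of \(\mathrm{Cay}(G, T \sqcup F)\) and the quasi-convexity of \(H_i\) lets one iteratively excise every \(H_i\)-edge from the projecting paths. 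This is precisely the content of the relative-metric machinery developed in \cite{DGO} (Sections 4.3--4.4), which is arguably more efficient to quote directly than to redo by hand.
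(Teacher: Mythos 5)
The paper does not prove this lemma; it cites \cite[Lemma~3.1]{AntolinSistoEndomorphismsAcyl} directly, so the comparison is against the argument in the literature rather than anything internal to the paper. Your overall scheme is the right one: take \(Q_i\) to be a ball in the relative metric \(\widehat{d}_i\) (finite by properness), walk along a \(d_T\)-geodesic from \(\1\) to \(h\), pull each vertex down to a point of \(H_i\), bound consecutive increments in \(\widehat{d}_i\), and telescope; the easy direction is also fine. Your pointer to \cite{DGO}, Sections~4.3--4.4, is the right toolbox.

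The gap is in the mechanism you offer for the crucial bound \(\widehat{d}_i(p_j,p_{j+1})\leq N\). Since \(F=\bigsqcup_i(H_i\setminus\{\1\})\), every \(H_i\) spans a complete subgraph of \(\mathrm{Cay}(G,T\sqcup F)\) and hence has diameter \(1\) there. Consequently ``\(H_i\) is quasi-convex in \(\mathrm{Cay}(G,T\sqcup F)\)'' is vacuously true and carries no content; ``closest-point projection onto \(H_i\)'' is not coarsely well-defined in any useful sense, because a vertex \(g_j\) at \(d_{T\sqcup F}\)-distance \(d\) from \(H_i\) has \emph{every} point of \(H_i\) at distance between \(d-1\) and \(d+1\); and \(d_{T\sqcup F}(p_j,p_{j+1})\leq C_1\) holds trivially with \(C_1=1\) for any \(p_j,p_{j+1}\in H_i\), so it gives no leverage towards the \(\widehat{d}_i\)-bound. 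The ``surgery/shortening argument based on \(\dots\) quasi-convexity of \(H_i\)'' therefore cannot be carried out as described. What actually does the work in \cite{DGO} is the isolated-component lemma for geodesic polygons, which bounds things directly in the relative metrics rather than via coarse projection: for each vertex \(g_j\) of the \(d_T\)-geodesic choose a \(d_{T\sqcup F}\)-geodesic \(\sigma_j\) from \(g_j\) to a nearest point \(p_j\in H_i\); minimality (together with \(\mathrm{diam}_{T\sqcup F}(H_i)=1\)) forces \(\sigma_j\) to contain no \(H_i\)-edge. In the geodesic quadrilateral with sides \(\sigma_j\), the \(T\)-edge \([g_j,g_{j+1}]\), \(\sigma_{j+1}^{-1}\), and the single \(H_i\)-edge \([p_{j+1},p_j]\), that last side is an isolated \(H_i\)-component, so \cite[Lemma~4.21]{DGO} bounds \(\widehat{d}_i(p_j,p_{j+1})\) uniformly. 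Substituting this for the quasi-convexity step, the telescoping argument (and hence both generation and the reverse inequality) goes through.
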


\begin{lemma}[\cite{AntolinSistoEndomorphismsAcyl}, Lemma 3.2]\label{lem: Laterals are separated in hyp emb}
Let \(\{H_i\} {\hookrightarrow}_{h} (G,T)\) be a hyperbolically embedded family of subgroups. Then for every \(\epsilon\) there  exists \(R=R(\epsilon)\) such that for every \(g \in G\) and \(i, j \in I\), if \(\mathrm{diam}(H_i \cap N_\epsilon(gH_j)) \geq R\), then \(i=j\) and \( g \in H_i\). 
\end{lemma}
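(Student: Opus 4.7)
The plan is to exploit the properness of the relative metric $\widehat{d}_i$ on $H_i$, which is the defining feature of hyperbolic embeddedness beyond hyperbolicity of $\mathrm{Cay}(G, T \sqcup H)$. The core idea is this: if $h_1, h_2 \in H_i$ both lie within $d_T$-distance $\epsilon$ of $gH_j$, then one can construct a short path from $h_1$ to $h_2$ in $\mathrm{Cay}(G, T \sqcup F)$ that avoids $E(H_i)$; this bounds $\widehat{d}_i(h_1, h_2)$, and properness of $\widehat{d}_i$ then forces $h_1^{-1}h_2$ to belong to a finite set, yielding a uniform bound on $d_T(h_1, h_2)$, which is the desired $R(\epsilon)$.

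Concretely, for $h_1, h_2 \in H_i \cap N_\epsilon(gH_j)$, I would pick $g_1, g_2 \in gH_j$ with $d_T(h_k, g_k)\leq \epsilon$ and consider the concatenation $h_1 \to g_1 \to g_2 \to h_2$ in $\mathrm{Cay}(G, T \sqcup F)$. The outer legs are $T$-paths of length at most $\epsilon$, while the middle is either trivial or a single $H_j$-edge, so the total length is at most $2\epsilon+1$. The $T$-edges of the outer legs are not in $E(H_i)$ by construction of $\mathrm{Cay}(G, T \sqcup F)$. The middle $H_j$-edge lies in $E(H_i)$ only when $i = j$ and both endpoints are in $H_i$; since its endpoints lie in $gH_j$, this is ruled out exactly by the hypothesis ``$i\neq j$ or $g \notin H_i$'', because in the case $i = j$ with $g \notin H_i$ the coset $gH_i$ is disjoint from $H_i$. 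Hence $\widehat{d}_i(h_1,h_2)\leq 2\epsilon+1$.

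Properness of $(H_i, \widehat{d}_i)$ now implies that $B_{\widehat{d}_i}(1, 2\epsilon+1)\subset H_i$ is finite, and hence has finite $d_T$-diameter $R(\epsilon)$. Since $\widehat{d}_i$ is $H_i$-invariant on the left and $d_T$ is $G$-invariant on the left, the same bound applies to the $\widehat{d}_i$-ball around $h_1$, which contains $h_2$, giving $d_T(h_1, h_2)\leq R(\epsilon)$. The only subtle point is the bookkeeping that determines whether the middle $H_j$-edge belongs to $E(H_i)$; once one observes that $E(H_i)$ only contains edges whose endpoints both lie in $H_i$, the hypothesis on $(i,j,g)$ slots in naturally, and no deeper tool than the definitions and properness of $\widehat{d}_i$ is required.
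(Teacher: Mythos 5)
The paper cites this result from Antol\'in--Minasyan--Sisto without reproducing a proof, so there is no in-paper argument to compare against; your argument is correct and, as far as I can tell, is essentially the proof given in that reference. Your identification of when the $H_j$-edge $g_1\to g_2$ lies in $E(H_i)$ is handled correctly (if $i\neq j$ the label is from the $H_j$-copy of $F$ so it is never an $H_i$-edge; if $i=j$ and $g\notin H_i$ the endpoints lie in $gH_i$, disjoint from $H_i$), and left $H_i$-invariance of $\widehat d_i$ plus properness then give the bound exactly as you say. One small point worth making explicit: the finite $\widehat d_i$-ball depends on $i$, so to produce a single $R(\epsilon)$ one takes a maximum over $i\in I$; this requires $I$ to be finite (or a uniform properness hypothesis), which is automatic in the applications in this paper since the hierarchically hyperbolically embedded definition assumes a finite family, and one also needs $T$ to generate $G$ for the $d_T$-diameter of that finite set to be finite, which again holds here since $CS=\mathrm{Cay}(G,T)$.
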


\subsection{Changing the hierarchically hyperbolic structure I: plan} \label{subsec: changing str I}

The goal of this subsection is to state the main result and streamline the strategy adopted. We will postpone the bulk of the proofs to the next paragraph. 
Consider the following class of graphs of groups. 

\begin{Def}\label{def:hyperbolically_obtainable}
Let \(\GG'= \bigsqcup \GG_i\) be a finite union of finite graphs of HHG. A graph of groups \(\GG\) is \emph{star-obtainable} from \(\GG'\) if \(\GG\) can be obtained from \(\GG'\) by a finite number of the following moves.
\begin{enumerate}
\item Joining two vertices of \(\GG'\) by an edge \(e\) such that \(G_e\) is hyperbolic.
\item Adding a new vertex \(v\) and joining it with a finite set of vertices of \(\GG'\) such that 
		\begin{rules}
		\item the vertex group \(G_v\) is hyperbolic;
		\item the edge groups are quasi-convex subgroups of \(G_v\); 
		\end{rules}
\end{enumerate}
We say that \(\GG\) is \emph{hyperbolic-obtainable} from \(\GG'\) if it is star-obtainable and the following holds.
For each vertex \(v\) of \(\GG'\) the set of edge groups adjacent to \(v\) that correspond to edges of \(\GG - \GG'\) is hierarchically hyperbolically embedded in \(G_v\).
\end{Def}

We remark that the construction (1) is a special case of the second one. Indeed, adding an edge with edge group \(H_e\) is the same (on the level of the fundamental group) as adding a vertex \(v\) adjacent to edges \(e_1, e_2\) such that \(H_{e_1} \cong G_v \cong H_{e_2}\). However, we listed them separately for simplicity. 

We will now recall enough terminology to be able to state the Combination Theorem for HHG.

\begin{Def}[Graph of hierarchically hyperbolic groups, \cite{HHSII}]
Let \(\GG =(\Gamma, \{G_e\}, \{G_v\}, \phi_e^\pm)\) be a graph of groups, where \(\Gamma\) represent the underlying graph, \(\{G_V\}\) are the vertex groups, 
\(\{G_e\}\) are the edges groups and \(\{\phi_e^\pm\}\) are the edge maps. 
We say that \(\GG\) is a \emph{graph of hierarchically hyperbolic groups} if
for each $v\in V,e\in E$, we have sets $\mathfrak S_v,\mathfrak S_e$
so that $(G_v,\mathfrak S_v)$ and $(G_e,\mathfrak S_e)$ are
hierarchically hyperbolic group structures for which  $\phi_{e}^\pm \colon
G_e\to G_{e^\pm}$ is a homomorphism of hierarchically hyperbolic
groups. 
\end{Def}
Another concept that we want to recall is the one of \emph{bounded supports}. However, since the definition is long and beyond the scope of this paper, we will refer to \cite[Definition 8.5]{HHSII} and consider instead the following two Lemmas. The intuitive idea is that given two graphs of HHG that are known to have bounded supports (for instance a graph consisting of a single vertex), we can connect them in a way that the supports do not propagate from one another, yielding that the new graph still has bounded supports. The proofs are a straightforward consequence of the definition of bounded supports (\cite[Definition 8.5]{HHSII}).
\begin{lemma}\label{lem: singleton has bounded supports}
Every graph of HHG that has only one vertex and no edges has bounded supports.
\end{lemma}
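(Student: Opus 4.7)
The plan is to verify the conditions of \cite[Definition 8.5]{HHSII} directly for the degenerate case. If $\GG$ is a graph of HHG with a single vertex $v$ and no edges, then the associated Bass-Serre tree $\T$ collapses to a single point (the unique copy of $v$) and $\pi_1(\GG)\cong G_v$. The HHG structure produced by the Combination Theorem for $\GG$ is essentially the given structure $(G_v,\S_v)$, together with whatever bookkeeping the Combination Theorem adds at the $\sqsubseteq$--maximal level to encode the tree of spaces.

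First I would unpack \cite[Definition 8.5]{HHSII}: bounded supports is a condition that asserts uniform boundedness, in the metric of $\T$, of the ``support'' in the Bass-Serre tree associated to each domain $U\in\S$. Concretely, one needs to exhibit a constant that bounds the diameter in $\T$ of the set of vertices/edges of the Bass-Serre tree on which a given domain projects non-trivially, together with uniform control on how these supports interact with the edge projections $\rho^{U}_{V}$ coming from adjacent copies of vertex/edge groups.

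Once this definition is recalled, the verification is immediate. Since $\T$ has a single vertex and no edges, every subset of $\T$ has diameter $0$; there are no adjacent copies of vertex or edge groups to compare with, so all the auxiliary projections and consistency conditions in \cite[Definition 8.5]{HHSII} are either vacuous or forced to be satisfied with constant $0$. Thus every instance of the bounded supports inequality holds with the trivial bound, and the lemma follows.

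The only potential difficulty is notational: one must be careful that the HHG structure on $\pi_1(\GG)=G_v$ furnished by the Combination Theorem really does reduce, in this trivial case, to $(G_v,\S_v)$ with no spurious domains, so that the relevant ``supports'' are indeed computed inside a one-point tree. This is a direct consequence of the construction of the tree of spaces in \cite[Section~8]{HHSII}, and aside from this routine check there is no mathematical obstacle.
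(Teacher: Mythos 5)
Your argument is correct and matches the route the paper implicitly intends (the paper simply asserts the claim is a ``straightforward consequence'' of \cite[Definition 8.5]{HHSII} without spelling it out): the Bass-Serre tree of a one-vertex, edgeless graph of groups is a single point, so every support has diameter zero and the bounded supports condition holds vacuously. One minor framing remark: bounded supports is a hypothesis on the \emph{input} graph of HHG rather than a property of the Combination Theorem's output, so the concern in your last paragraph about how the combined HHG structure reduces is unnecessary --- the supports are subtrees of the Bass-Serre tree from the outset, independently of what the Combination Theorem produces.
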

\begin{lemma}\label{lem:def_bounded_support}
Let \(\GG_1\) and \(\GG_2\) be two finite graphs of hierarchically hyperbolic groups that have the bounded supports. 
Let \(\GG\) be a graph of hierarchically hyperbolic groups obtained adding edges to \(\GG_1\) and \(\GG_2\) in such a way that the result is a (connected) finite graph of hierarchically hyperbolic groups. 
Suppose that for each new edge \(e\), with edge group \((H_e, \H_e)\), the following holds. For each vertex \(v\) on \(e\), edge \(f \neq e\) incident to \(v\) and \(g_1, g_2 \in G_v\) we have that \( g_1 \phi_f^\bigcirc (\H_f) \cap g_2 \phi_e^\bigcirc (\H_e) = \emptyset\), 
where \(\phi_f : H_f \rightarrow G_v\) is the edge map.
Then \(\GG\) has bounded supports.
\end{lemma}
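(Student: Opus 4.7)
My plan is to unpack Definition 8.5 of \cite{HHSII} and then control how the ``support'' of a domain propagates across the edges of \(\GG\), leveraging the disjointness hypothesis to ensure that no domain's support can extend beyond a single new edge.

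First, I would interpret the bounded supports condition in the following geometric way: the index sets \(\mathfrak{S}_v, \mathfrak{S}_e\) of the vertex and edge groups are glued along the hieromorphisms \(\phi_e^\pm\) into ``global'' domains, and a global domain has \emph{support} equal to the subgraph of the underlying graph on which it is realized. The propagation mechanism is: a domain \(V \in \mathfrak{S}_v\) extends across an edge \(e\) incident to \(v\) if and only if \(V\) lies in some \(G_v\)-translate of \(\phi_e^\bigcirc(\H_e) \subseteq \mathfrak{S}_v\); only then can \(V\) be identified with a domain in \(\mathfrak{S}_{H_e}\) and passed through \(e\) to the opposite vertex.

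Next, I would split the global domains of \(\GG\) into two families. If the support of a global domain does not meet any new edge, then it is contained entirely in \(\GG_1\) or \(\GG_2\), and is uniformly bounded by the hypothesis that each \(\GG_i\) has bounded supports. Otherwise, there is a new edge \(e\) with endpoints \(v, w\) that lies in the support; in particular, the local representative \(V \in \mathfrak{S}_v\) satisfies \(V \in g_2 \phi_e^\bigcirc(\H_e)\) for some \(g_2 \in G_v\). The disjointness hypothesis applied at \(v\) then gives \(V \notin g_1 \phi_f^\bigcirc(\H_f)\) for every edge \(f \neq e\) incident to \(v\) and every \(g_1 \in G_v\), so \(V\) cannot propagate through any edge at \(v\) other than \(e\). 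Applying the symmetric statement of the hypothesis at \(w\) to the matched representative \(V' \in \mathfrak{S}_w\), we conclude that the entire support of this global domain is contained in \(\{v, e, w\}\).

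In both cases the support is uniformly bounded, by the maximum of the support bounds provided for \(\GG_1\), \(\GG_2\), and the constant \(3\). Hence \(\GG\) has bounded supports. I expect the only real work to be the dictionary between this geometric ``support'' picture and the precise equivalence-relation formulation of Definition 8.5 of \cite{HHSII}; once that translation is in place, the argument is essentially a tautology, consistent with the paper's own claim that the lemma is a straightforward consequence of the definition.
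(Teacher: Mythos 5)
Your high-level picture — the disjointness hypothesis prevents a global domain's support from propagating through a new edge into the rest of the graph, and everything else is controlled by the bounded-supports hypotheses on \(\GG_1,\GG_2\) — is exactly the intuition the paper itself describes before stating the lemma, and the paper supplies no more detail than you do (it just declares the lemma a ``straightforward consequence'' of \cite[Definition~8.5]{HHSII}). So the case split (support avoids all new edges vs.\ support contains a new edge) and the use of the disjointness hypothesis to kill propagation both match what the paper has in mind.

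There is, however, one concrete point your sketch glosses over, and it is the one place I think the argument genuinely needs care. The support of a global domain lives in the Bass-Serre tree \(T\), not in the underlying graph \(\Gamma\) of \(\GG\): a single lift \(\tilde v\) of a vertex \(v\) is incident to \emph{many} lifts of the same new edge \(e\), one for each coset in \(G_v/\phi_e(H_e)\), and each lift carries its own translate \(g\,\phi_e^\bigcirc(\H_e)\) as the image of the corresponding edge hieromorphism. The hypothesis you quote only rules out \(g_1\phi_f^\bigcirc(\H_f)\cap g_2\phi_e^\bigcirc(\H_e)\) for \(f\neq e\); it says nothing about \(g_1\phi_e^\bigcirc(\H_e)\cap g_2\phi_e^\bigcirc(\H_e)\) with \(g_1H_e\neq g_2H_e\). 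If that intersection is nonempty, the domain propagates across two distinct lifts of \(e\) from \(\tilde v\), and then from each of the opposite vertices it could again cross further lifts of \(e\), so the support can a priori grow without bound. Your conclusion ``the entire support \(\ldots\) is contained in \(\{v,e,w\}\)'' silently assumes this cannot happen. It is in fact ruled out in the paper's actual application, because the new domains \(\Z_i=\H_i\times\MCG(i)\) built in Construction~\ref{construction HHG} are indexed by coset representatives, so distinct \(G_v\)-translates of \(\phi_e^\bigcirc(\H_e)\) are literally disjoint; but to prove the lemma \emph{as stated} you should either add this disjointness of \(e\)-translates to the hypotheses, or explain why it already follows from the definition of a graph of HHG or from Definition~8.5.
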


%\begin{Def}[Hierarchical quasi-convexity]\label{def:hierarchically_q.c.}
%Let \((\X, \S)\) be a hierarchically hyperbolic space. We say that a subset \(Y \subset \MCG\) is \emph{\(k\)-hierarchically quasi-convex}, for some \(k \colon [0,\infty) \rightarrow [0, \infty)\) if the following holds:
%\begin{enumerate}
%\item For each \(U \in \S\), one has that \(\pi_U(Y)\) is \(k(0)\)-quasi-convex in \(CU\).
%\item For every \(r \geq 0\) and every \(x \in \X\) such that \(d_{CU}(\pi_U(x), \pi_U(Y)) \leq r\) for all \(U \in \S\), one has that \(d_\X (x, Y) \leq k(r)\).
%\end{enumerate} 
%\end{Def}

We recall the Combination Theorem for HHG.

\begin{thm}[Combination Theorem for HHG, \cite{HHSII} Corollary 8.22]\label{thm: combination HHG}
Let \(\GG = (\Gamma, \{G_v\}, \{G_e\}, \{\phi^\pm_e\})\) be a finite graph of hierarchically hyperbolic groups and suppose that the following are satisfied.
\begin{rules}
\item For each edge \(e\), the images  \(\phi_e^+ (G_e)\)  and \(\phi_e^- (G_e)\) have hierarchically quasi-convex image (Definition \ref{def:hierarchically_q.c.}).
\item The maps \(\phi^\pm_e\), as hieromorphisms, are full (Definition \ref{def:full}).
\item For each edge \(e\), the image of the maximal element \(S_e \in \S_e\) is not orthogonal to any element \(V \in \S_{e^\pm}\). 
\item \(\GG\) has bounded supports.
\end{rules}
Then the fundamental group of \(\GG\) admits a hierarchically hyperbolical group structure. 
\end{thm}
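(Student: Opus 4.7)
The approach is to build a space \(\T\) admitting a natural proper cocompact action of \(\pi_1(\GG)\), and to equip it with a \(\pi_1(\GG)\)-equivariant HHS structure in the sense of Definition \ref{defn:space_with_distance_formula}. Let \(T\) denote the Bass-Serre tree of \(\GG\); then \(\T\) is the tree of spaces obtained by placing a copy of the Cayley graph of \(G_{\tilde v}\) at each vertex \(\tilde v\) of \(T\) and gluing along copies of Cayley graphs of edge groups via the edge-map hieromorphisms. Standard Bass-Serre theory gives the desired action of \(\pi_1(\GG)\) on \(\T\), so an equivariant HHS structure on \(\T\) will promote to a hierarchically hyperbolic group structure on \(\pi_1(\GG)\).

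The index set will be \(\S = \bigl(\bigsqcup_{\tilde v \in V(T)} \S_{\tilde v}\bigr) \sqcup \{S_\T\}\), where \(\S_{\tilde v}\) is a translated copy of the vertex-group index set carried at the tile \(\tilde v\), and \(S_\T\) is a new \(\sqsubseteq\)-maximal element. For \(U \in \S_{\tilde v}\) the associated hyperbolic space is the vertex-group space \(CU\); the top-level hyperbolic space \(CS_\T\) is built by decorating each vertex of \(T\) with a copy of \(CS_v\), gluing along the hieromorphically embedded copies of \(CS_e\) at each edge, and then coning off as in Subsection \ref{subsec: cone-offs} to recover hyperbolicity via Proposition \ref{Kapovich-Rafi}. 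Projections \(\pi_U\) on \(\T\) are defined by first retracting to the relevant tile (using hierarchical quasi-convexity of the edge images) and then applying the vertex-group projection; the relative projections \(\rho^U_V\) are inherited from the vertex-group structure when \(U,V\) lie in a common tile, and are constructed by following the Bass-Serre geodesic through the edge hieromorphisms when they live in different tiles. Each of the four combination-theorem hypotheses plays a specific structural role: hypothesis (1) (hierarchical quasi-convexity) controls the tile retractions; hypothesis (2) (fullness) prevents information loss across gluings; hypothesis (3) (non-orthogonality of maximal edge images) ensures that the top-level spaces actually glue; and hypothesis (4) (bounded supports) controls propagation of HHG data across tiles.

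Verification of the HHS axioms is the bulk of the work, and I would attack them in the following order. First, hyperbolicity of \(CS_\T\) is delicate, because a tree of hyperbolic spaces need not itself be hyperbolic; here fullness of the edge maps ensures the cone-off absorbs any bigons crossing tiles. Second, consistency and bounded geodesic image across tiles are handled by bounded supports, which is precisely the statement that only boundedly many tiles can contribute to any given projection. Third, large links (and with them the distance formula) follow by decomposing \(\T\)-distance into an intra-tile contribution, controlled by the vertex-group distance formula (Theorem \ref{thm:Distance_formula}), and an inter-tile contribution, controlled by distance in \(CS_\T\). Finally, finite complexity and uniqueness are inherited from the vertex-group structures combined with the fact that the tile decomposition adds only a single extra layer of nesting. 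The single hardest step is the proof of hyperbolicity of \(CS_\T\), as it is the place where all four combination-theorem hypotheses simultaneously come into play.
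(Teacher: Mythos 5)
This theorem is not proved in the paper; it is imported as a black box from \cite{HHSII} (Corollary 8.22), so there is no ``paper's own proof'' to compare against. Your attempt is therefore a reconstruction of the argument in \cite{HHSII}, and I will assess it on those terms.

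The overall shape --- pass to the tree of spaces $\T$, equip it with an equivariant HHS structure, push down to $\pi_1(\GG)$ --- is the right one, but there is a genuine gap in how you set up the index set. Taking $\S = \bigl(\bigsqcup_{\tilde v} \S_{\tilde v}\bigr) \sqcup \{S_\T\}$ as a disjoint union cannot work: two indices $U \in \S_{\tilde v}$ and $U' \in \S_{\tilde v'}$ living in adjacent tiles and identified via the edge hieromorphisms must be \emph{the same} element of $\S$, not two distinct ones. If you keep them distinct, the projections $\pi_U$ and $\pi_{U'}$ on $\T$ are coarsely equal, so consistency would force the relative projections $\rho^U_V$ and $\rho^{U'}_V$ to coincide for all $V$, the Large Link axiom would overcount, and --- more fatally for the HHG conclusion --- $\pi_1(\GG)$ would act with infinitely many orbits on $\S$, since infinitely many tiles of the Bass-Serre tree map to the same vertex of $\GG$. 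The construction in \cite{HHSII} takes equivalence classes: the \emph{support} of an index element is the connected subtree of $T$ over which it propagates through the edge maps, and the index set consists of these equivalence classes together with the top element. This is exactly where hypothesis (4), bounded supports, earns its keep --- not merely to ``control propagation'' abstractly, but to ensure each support is a bounded subtree so that the quotient index set has only finitely many $\pi_1(\GG)$-orbits and diameters of the $\rho$-sets stay uniformly bounded. Fullness (2) is what guarantees that propagation along an edge is a bijection on the nested sublattice rather than an injection, which is needed for the equivalence relation to be well-behaved. Your reading of hypothesis (3) is also off: non-orthogonality of the maximal edge image is not about whether spaces ``glue,'' but is needed so that the new top-level element $S_\T$ can be declared to nest everything without creating orthogonality violations and so that the orthogonality relation on the quotient index set is consistent. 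Finally, I would not single out hyperbolicity of $CS_\T$ as the hardest step; that follows fairly directly from coning-off and Proposition \ref{Kapovich-Rafi} once the edge images are hierarchically (hence top-level) quasi-convex. The heavy lifting in \cite{HHSII} is in verifying consistency, bounded geodesic image, and large links across tile boundaries, precisely because one must track how an index element's support and its associated projections change as one crosses edges.
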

\begin{rmk}\label{rmk:singleton graph}
We remark that a graph with a single vertex of valence 0 satisfies the hypotheses of Theorem \ref{thm: combination HHG}. Indeed, there are no edge maps and the bounded supports follows from Lemma \ref{lem: singleton has bounded supports}.
\end{rmk}
The main result of this section is the following.

\begin{thm}\label{thm: main application combination}
Let \(\GG\) be a graph of groups that is hyperbolic-obtainable from \(\GG' = \bigsqcup \GG_i\), where each \(\GG_i\) is a finite graph of hierarchically hyperbolic groups that satisfies the hypotheses of Theorem \ref{thm: combination HHG}.
Then there are HHG structures on the vertices and edges of \(\GG\) (as in Construction \ref{construction HHG}) such that \(\GG\) satisfies the hypotheses of Theorem \ref{thm: combination HHG}. In particular, \(\pi_1 (\GG)\) admits a hierarchically hyperbolic group structure.
\end{thm}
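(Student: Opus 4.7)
The plan is to induct on the number of moves used to pass from $\GG'$ to $\GG$. At each move I will specify HHG structures on the newly involved vertex and edge groups, and possibly replace the HHG structure on a vertex group of $\GG'$ chosen at an earlier stage. After processing all moves I will verify the four hypotheses of Theorem \ref{thm: combination HHG}. The base case $\GG = \GG'$ holds by assumption, since the Combination Theorem hypotheses are checked edge-by-edge and each $\GG_i$ already satisfies them.

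\textbf{Choice of HHG structures.} Three kinds of groups need to be equipped. For each new hyperbolic vertex group $G_v$ introduced by a move of type (2), its finitely many incident edge groups form a finite family $\F_v$ of quasi-convex subgroups of $G_v$; invoking Theorem \ref{thm: HHG structure on G} yields an HHG structure $(G_v,\S_v)$ whose indexing set contains (a coset of) each $F \in \F_v$, and equipping each such edge group with the induced structure from Corollary \ref{cor: sub-fac-sys Gps} produces a full hieromorphic inclusion by construction. For each vertex $v$ of $\GG'$ whose incident edges from $\GG - \GG'$ are nonempty, those new edge groups form a hierarchically hyperbolically embedded family in $G_v$ by hypothesis; I apply Proposition \ref{prop:properties_of_the_new_structure} to replace the HHG structure on $G_v$ by one in which every new edge group is absorbed as a full sub-hieromorphism and becomes hierarchically quasi-convex. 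New edge groups coming from move (1) are hyperbolic and hierarchically hyperbolically embedded in both endpoints; they inherit an HHG structure chosen once and for all so as to be compatible on both sides with the previous two constructions.

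\textbf{Verification and main obstacle.} Condition (1) (hierarchical quasi-convexity of edge images) follows from Proposition \ref{prop: Y is hierarchically quasi-convex} on the hyperbolic vertex-group side, where edge groups are quasi-convex by definition of move (2), and is part of the conclusion of Proposition \ref{prop:properties_of_the_new_structure} on the $\GG'$ side. Condition (2) (fullness of the edge maps) is built into both constructions. Condition (3) (non-orthogonality of the maximal element $S_e$ of $\S_e$) holds because in both the factor-system construction of Theorem \ref{thm: HHG structure on G} and the absorption step of Proposition \ref{prop:properties_of_the_new_structure}, the indexing element corresponding to the edge subgroup is, by design, not orthogonal to anything else in the ambient structure. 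Condition (4) (bounded supports) propagates inductively via Lemma \ref{lem:def_bounded_support}, whose disjointness hypothesis on cosets of distinct edge groups around a common vertex is verified using Lemma \ref{lem: Laterals are separated in hyp emb} for edges of $\GG - \GG'$, with Lemma \ref{lem: singleton has bounded supports} providing the base case whenever a new vertex from move (2) enters the picture. The main obstacle is Proposition \ref{prop:properties_of_the_new_structure}: producing, for each $v$ in $\GG'$, a new HHG structure on $G_v$ that simultaneously absorbs every new HHE edge group as a full hieromorphic image, preserves hierarchical quasi-convexity of the edge images already present from $\GG'$, and introduces no spurious orthogonality against the new maximal indices. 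All four hypotheses of Theorem \ref{thm: combination HHG} hinge on this structure having precisely the right shape; everything else is bookkeeping and an inductive check that earlier stages are not disrupted.
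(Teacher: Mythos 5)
Your overall architecture matches the paper's: equip new vertex groups via Theorem \ref{thm: HHG structure on G}, absorb the HHE edge families into the $\GG'$ vertex groups via Proposition \ref{prop:properties_of_the_new_structure}, then check the four hypotheses of Theorem \ref{thm: combination HHG}, with bounded supports from Lemma \ref{lem:def_bounded_support}. However, your treatment of condition (1) — hierarchical quasi-convexity of the edge-group images — has a genuine gap that hides most of the actual work of the proof.

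First, you assert that hierarchical quasi-convexity of the new edge-group images in the modified $\GG'$ vertex structure ``is part of the conclusion of Proposition \ref{prop:properties_of_the_new_structure}.'' It is not: the proposition concludes that each $\phi_i$ is a homomorphism of HHGs and that $\phi_i^\bigcirc(\H_i)\cap\S=\emptyset$, but hierarchical quasi-convexity of $\phi_i(H_i)$ in $(\MCG,\I)$ is a separate statement, which the paper proves directly (using Lemma \ref{lem: The element of H projects uniformly on the U}, Lemma \ref{lem: Laterals are separated in hyp emb}, normalization, and the distance formula). Second — and more seriously — you do not address at all the case of edges $e$ that were \emph{already in} $\GG'$ and are incident to a vertex $v$ whose HHG structure you have just replaced. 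The hypothesis only gives hierarchical quasi-convexity of $\phi_e^\pm(G_e)$ relative to the \emph{old} structure $(\MCG_v,\S_v)$; you must re-verify it in the new structure $(\MCG_v,\I_v)$. This is not automatic: the index set has been enlarged by the cosets $\Z$ and the maximal hyperbolic space has been coned off, so both clauses of Definition \ref{def:hierarchically_q.c.} have to be re-established. The paper handles this by combining quasi-convexity of $p_{gH_i}(\Y)$, the factor-system HHS on $CS$ provided by Corollary \ref{cor: Simple Factor System} and Proposition \ref{prop: Y is hierarchically quasi-convex}, and the per-coset HHS structures from $\Z$ to bootstrap back from control in $\Pyr$ and the $CU$ for $U\in\Z$ to control in $CS$, and only then invokes the old hierarchical quasi-convexity. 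This is the hardest step in the verification, and it is missing from your argument; your ``everything else is bookkeeping'' remark sweeps it under the rug. A smaller point: in your justification of bounded supports you appeal to Lemma \ref{lem: Laterals are separated in hyp emb} (a geometric separation statement), but the hypothesis of Lemma \ref{lem:def_bounded_support} is a disjointness condition on index sets, $g_1\phi_f^\bigcirc(\H_f)\cap g_2\phi_e^\bigcirc(\H_e)=\emptyset$; this is met by the structural facts that the $\Z_i$ are pairwise disjoint and disjoint from $\S$, not by the metric separation of cosets.
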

\begin{proof}
We postpone the full proof of the Theorem to the end of this section. 
\end{proof}

This will allow to "inductively apply" the construction of Definition \ref{def:hyperbolically_obtainable} and Theorem \ref{thm: combination HHG} to obtain an HHG structure on interesting groups.
Since every hierarchically hyperbolic group can be realized as the fundamental group of a graph of HHG with one vertex and no edges, Remark \ref{rmk:singleton graph} provides an interesting class of examples. For instance, if \(H\) is a quasi-convex subgroup of a hyperbolic group \(G\) and \(H \hookrightarrow_{hh} \mathrm{MCG}\), where \(\mathrm{MCG}\) is the Mapping Class Group of a given (non sporadic) surface, we get that \(\mathrm{MCG} \ast_H G\) admits an HHG structure. 

\subsection{Changing the hierarchically hyperbolic structure II: technical part}\label{subsec: changing str II}

In this subsection we will prove Theorem \ref{thm: main application combination}. The proof will follow three steps. 
Firstly we will show that it is possible to equip the new added edge and vertex groups with an HHG structure. This is an easy consequence of Theorem \ref{thm: HHG structure on G}.
Then we will need to show that the whole graph of groups is a graph of HHG. To achieve this, we will change the HHG structure on (some) of the original vertices. This will require some amount of work.
Finally we will verify that the new graph of HHG satisfies the hypotheses of Theorem \ref{thm: combination HHG}, mainly the one concerning hierarchical quasi-convexity.

\begin{Def}[Normalized HHS]
The HHS \((\X, \S)\) is \emph{normalized} if there exists \(C\) such that for each \(U \in \S\) one has \(CU = N_C (\pi_U(\X))\).
\end{Def}

\begin{prop}[\cite{HHSBoundaries} Proposition 1.16]\label{prop:normalized HHS}
Let \(\X, \S\) be a hierarchically hyperbolic space.  Then \(\X\) admits a normalized hierarchically hyperbolic structure \((\X, \S')\) with a hieromorphism \(f \colon (\X, \S) \rightarrow (\X, \S')\) where \(f: \X \rightarrow \X\) is the identity and \(f^\bigcirc: \S \rightarrow \S'\) is a bijection.  Moreover, if
\(G \leq \mathrm{Aut}(\X, \S)\), then there is a homomorphism \(G \rightarrow \mathrm{Aut}(\S')\) making \(f\) equivariant.
\end{prop}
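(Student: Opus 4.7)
The strategy is to replace each hyperbolic space $CU$ by a uniformly bounded neighborhood of the projected image, and then transport all the HHS data to this smaller model. Concretely, fix a constant $C$ large compared to the diameter bound $\xi$, the hyperbolicity constant $\delta$, and the consistency constants $\kappa_0,E$ of $(\X,\mathfrak S)$, large enough that for each $U$, the set $CU':=N_C(\pi_U(\X))\subseteq CU$ is a connected, uniformly quasi-isometrically embedded subspace of $CU$ (if connectivity fails one enlarges by a bounded amount, using that a bounded neighborhood of any subset of a hyperbolic space is again uniformly hyperbolic). Take $\mathfrak S':=\mathfrak S$ as a set, equipped with the same nesting, orthogonality and transversality relations, and declare $\pi_U':=\pi_U$, which tautologically lands in $CU'$.

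Next I would transport the $\rho$-data by bounded modification. For each coarsely constant $\rho^V_U\subset CU$, define $(\rho^V_U)':=p_{CU'}(\rho^V_U)$, the closest-point projection onto the quasi-convex subset $CU'$; since $\mathrm{diam}(\rho^V_U)\le\xi<C$, this remains a set of diameter $\le\xi'$ for some uniform $\xi'$. For the genuine projection maps $\rho^U_V\colon CU\to 2^{CV}$ (defined when $V\sqsubsetneq U$), set $(\rho^U_V)'(x):=p_{CV'}(\rho^U_V(x))$ for $x\in CU'$. Every new piece of data agrees with the old one up to a uniformly bounded error depending only on $C,\delta$.

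Then I would verify the HHS axioms one by one. The projections and the index structure are untouched, so coarse Lipschitzness, finite complexity, orthogonality and the large link axiom carry over unchanged. Partial realization and uniqueness depend only on the $\pi_U$, so they persist verbatim. For consistency (in both the transverse and nested cases), both sides of each inequality have changed by at most a uniformly bounded amount, so the inequalities hold with a larger $\kappa_0'$. For bounded geodesic image, a $CU'$-geodesic is a uniform quasi-geodesic in $CU$, so the original BGI applied in $CU$ combined with the bounded modification of $\rho$'s yields BGI in the new structure, with a bigger $E'$. This gives the normalized HHS structure $(\X,\mathfrak S')$.

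For the hieromorphism $f$, take $f:=\mathrm{id}_\X$, $f^\bigcirc:=\mathrm{id}_{\mathfrak S}$ (a bijection by construction), and $f^*(U)\colon CU\to CU'$ the closest-point projection, which is a uniform quasi-isometric embedding on the relevant image (and the identity on $CU'$); coarse commutation with $\pi$ and $\rho$ is automatic since everything was defined by bounded-error projection. Finally, equivariance under $G\le\mathrm{Aut}(\X,\mathfrak S)$ is immediate: the whole construction of $(\X,\mathfrak S')$ depends only on the projections $\pi_U(\X)$ and on the original $\rho$-data, all of which are permuted equivariantly by $G$; in particular $G$ permutes the subsets $CU'=N_C(\pi_U(\X))$ by isometries, giving the desired map $G\to\mathrm{Aut}(\mathfrak S')$ with $f$ equivariant. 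The step I expect to require the most care is checking the nested-consistency inequality and bounded geodesic image simultaneously, since the modified $\rho^U_V$ is now only defined on $CU'$ and one must ensure that $CU'$-geodesics see the same $\rho$-triggers as their $CU$-extensions up to bounded error; but this follows from the uniform quasiconvexity of $CU'$ inside $CU$.
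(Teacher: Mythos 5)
This statement is quoted, not proved, in the present paper --- it is Proposition~1.16 of Durham--Hagen--Sisto \cite{HHSBoundaries} --- so there is no in-paper argument to compare with. Your construction (keep $\mathfrak S$, its relations, and the $\pi_U$, shrink each $CU$ to a neighborhood of $\pi_U(\X)$, and push the $\rho$-data down by closest-point projection) is the correct strategy and is essentially what \cite{HHSBoundaries} does.

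There is, however, a genuine gap at the very first step. You declare $CU' := N_C(\pi_U(\X))$ and assert that for $C$ large enough this is a connected, uniformly quasi-isometrically embedded (hence hyperbolic) subspace of $CU$, justifying this with the parenthetical claim that ``a bounded neighborhood of any subset of a hyperbolic space is again uniformly hyperbolic.'' That claim is false: the $C$-neighborhood of a horosphere in $\mathbb H^3$, in its induced path metric, is quasi-isometric to the Euclidean plane, hence not Gromov hyperbolic; and no choice of $C$ repairs this. More to the point, what your construction actually requires is that $\pi_U(\X)$ be uniformly quasiconvex in $CU$ (so that its neighborhood is quasi-isometrically embedded and hyperbolic, and so that closest-point projections onto $CU'$ and $CV'$ are coarse Lipschitz and well-behaved, which you use repeatedly when transporting the $\rho$'s and in the ``bounded geodesic image'' step). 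This quasiconvexity is true for an HHS, but it is not a triviality about subsets of hyperbolic spaces: it is a consequence of the existence of hierarchy paths (Theorem~\ref{thm: Existence of hierarchy paths}) --- given $p,q\in\pi_U(\X)$ with preimages $x,y\in\X$, the $\pi_U$-image of a hierarchy path from $x$ to $y$ is a uniform quasi-geodesic lying in $\pi_U(\X)$, which gives uniform quasiconvexity. This input must be invoked explicitly; without it the new spaces need not be hyperbolic and the rest of the verification has nothing to stand on. A smaller point worth stating explicitly: when you set $(\rho^V_U)' := p_{CU'}(\rho^V_U)$ for $V\pitchfork U$, you should observe (via the original transverse consistency inequality) that $\rho^V_U$ already lies within uniformly bounded distance of $\pi_U(\X)$ whenever $\mathrm{diam}(\pi_V(\X))$ is large, so that the projection moves $\rho^V_U$ only a bounded amount; otherwise the new Behrstock inequality could fail.
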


\begin{rmk}\label{rmk:CS=Cayley_graph}
Let \((\MCG, \S)\) be a normalized hierarchically hyperbolic group and let \(S\in \S\) be the \(\sqsubseteq\)-maximal element. Then the action of \(\MCG\) on \(CS\) is cobounded. In particular, there is a (possibly infinite) set of generators \(X\) for \(\MCG\) such that \(CS\) is quasi-isometric to \(\mathrm{Cay}(\MCG, X)\).
\end{rmk}

\begin{convention}
From now on, we will use Proposition \ref{prop:normalized HHS} to assume that all the hierarchically hyperbolic spaces are normalized. Moreover, by Remark \ref{rmk:CS=Cayley_graph}, we can assume that for each hierarchically hyperbolic group \((\MCG, \S)\), the maximal hyperbolic space \(CS\) associated it is a Cayley graph of \(\MCG\). 
\end{convention}

\begin{notation}
In what follows we will deal with different hierarchically hyperbolic structures on the same space. In order to avoid confusion, if \((\MCG, \S)\) and \((\MCG, \S')\) are two different hierarchically hyperbolic structure on \(\MCG\), we will denote by \(C_\S U\), the hyperbolic spaces associated to \(U\) in \(\S\), and by \(C_{\S'} U\) the one associated to the second structure. Similarly, we decorate the projections \(\pi_U\) and the maps \(\rho^U_V\) with left superscripts \(\prescript{\S}{}{\pi_U}\) and \(\prescript{\S}{}{\rho_V^U}\). 
\end{notation}

We want to describe how to incorporate the HHG structure of hierarchically hyperbolically embedded subgroups into an ambient HHG. 
First, consider the following construction, which is analogous to the one described in \cite[Proposition 6.14]{HHSAsdim2015}

\begin{construction}[Cf. \cite{HHSAsdim2015}]\label{construction HHG}
\emph{Let \(\{(H_i, \H_i)\}_{i \in I}\) be a finite family of hyperbolic hierarchically hyperbolic groups, and let \((\MCG, \S)\) be a hierarchically hyperbolic group. Suppose that the family \(\{H_i\}_{i \in I}\) is hierarchically hyperbolically embedded in \(\MCG\). We construct a new HHS structure on \(\MCG\).}

For each \(i \in I\),  let \(\phi_i: H_i \hookrightarrow \MCG\) be the inclusion maps.
By Remark \ref{rmk:CS=Cayley_graph}, there is a set \(T \subseteq \MCG\) so that \(C_\S S =\mathrm{Cay} (\MCG, T)\).
Let \(\mathrm{Cos}(\{H_i\})\) be the set of all the cosets of the various \(H_i\) in \(\MCG\), and let \(\Pyr\) be the cone-off of \(CS\) with respect to \(\mathrm{Cos}(\{H_i\})\).

By Lemma \ref{lem: hyp emb -> QI emb}, the maps \(\phi_i: H_i \hookrightarrow CS\) provide (uniform) quasi-isometric embeddings for all the elements of \(\mathrm{Cos}(\{H_i\})\).
Since each coset \(g\phi_i(H_i)\) is isometric to \(\phi_i(H_i)\), since HHS structures are preserved by quasi-isometry, and since, by assumption, \((H_i, \H_i)\) is a hierarchically hyperbolic group, we can push forward the HHS structure of \(H_i\) to all the cosets \(g\phi_i (H_i)\).

The new HHS on the group \(\MCG\) will be obtained "adding" all the above HHS structures to \((\MCG, \S)\). Since all the HHS structures on the various cosets (of the same \(H_i\)) are identical, we will describe a precise indexing set for all the HHS structures of the elements of  \(\mathrm{Cos}(\{H_i\})\).
For each \(i \in \I\) let \(\MCG (i)\) be a maximal set of elements of \(\MCG\) such that for each pair \(g_1, g_2 \in \MCG (i)\), we have that \(g_1 g_2^{-1} \not \in H_i\) (that is, choose once and for all a representative for the cosets of \(H_i\)). % For simplicity, assume that \(\1 \in \MCG(i)\), for each \(i\).\marginpar{Do I use that?} 
Then for each \(i \in I\) define \(\Z_i\) to be the set \(\H_i \times \MCG (i)\), and let \(\Z = \bigsqcup_{i \in I} \Z_i\). 
 
We claim (Proposition \ref{prop:properties_of_the_new_structure}) that the following defines a hierarchically hyperbolic group structure on \(\MCG\):
\begin{itemize}
\item The index set is \(\I= \S \sqcup \Z\)
\item The relations \(\sqsubseteq\) and \(\bot\) restricted to \(\S\) are unchanged. Between the elements of a level set \(\H_i \times \{g\}\subseteq \Z\), the relations are defined as on \(\H_i\), and we set all the elements of \(\Z\) to be nested into \(S\). In all the other cases, we declare two elements to be transverse (that is, neither orthogonal nor nested). 
\item The spaces \(C_\I W\) are defined as: 
	\begin{rules}
	\item \(\Pyr\) if \(W= S\); 
	\item \(C_\S W\) if \(W\in \S -\{S\}\); 
	\item \(C_{\H_i}U \) if \(W =(U, g)\), where \(U \in \H_i\) and \(g \in \MCG(i)\).
	\end{rules}
\item The projections \(\prescript{\I}{}{\pi_W} : \MCG \rightarrow 2^{C_\I W}\) are unchanged on \(\S\). Given an element \(W \in \Z\), there is \(i \in I\), \(U \in \H_i\) and \(g \in \MCG (i)\) such that \(W = (U, g)\). Then we define \(\prescript{\I}{}{\pi_W}\) as the composition \(\prescript{\H_i}{}{\pi_U} \circ p_{gH_i}\), where \(p_{H_i}\) denotes the closest point projection in  \(CS\) on the coset \(gH_i\), and we identify \(gH_i\) with \(H_i\).
\item The maps and spaces \(\prescript{\I}{}{\rho^U_V}\) are:
	\begin{rules}
	\item unchanged if \(U\) and \(V\) are elements of \(\S\); 
	\item unchanged if \(U\) and \(V\) are elements of \(\H_i\times \{g\}\) for some \(i \in I\) and \(g \in \MCG(i)\);
	\item defined as follow if we are not in the above cases. Let \(W \in \{U,V\}\) and define \(S_W\) as \(S\) if \(W \in \S\), or as \(gH_i\) if 
	\(W \in Z_i\). We have that \(S_U\) and \(S_V\) can be  uniquely seen as subgraphs of \(CS\), so it is well defined the closest point projection 
	\(p_{S_V}\), where we remark that we consider the metric in \(CS\).
	Then we set \(\rho^U_V\) as the composition \(\pi_V \circ p_{S_V} \circ \rho^U_{S_U}\), where \( \rho^U_{S_U}\) is defined 
	as in the lines above and \(\pi_V\) is well defined, since by hypotheses \(CS\) is a Cayley graph for \(\MCG\).
	\end{rules}	
\end{itemize}

\end{construction}

\begin{prop}\label{prop:properties_of_the_new_structure}
The structure described in Construction \ref{construction HHG} is a hierarchically hyperbolic structure on \(\MCG\) such that:
\begin{rules}
\item \((\MCG, \I)\) is a hierarchically hyperbolic group;
\item \(\S \subseteq \I\); 
\item for each \(i \in I\), the map \(\phi_i: H_i \rightarrow \MCG\) is a homomorphism of hierarchically hyperbolic groups, with respect to the above structures.
\item for each \(i \in I\) we have that \(\phi_i^\bigcirc (\H_i) \cap \S = \emptyset\), where \(\phi_i^\bigcirc: \H_i \rightarrow \I\) is the map on the index set level induced by \(\phi\).
\end{rules}
%\begin{proof}
%The only part left to prove is the last point. If the map \(\phi_i\) was coarsely surjective, then we would have that \(S\) is contained in \(\phi^\circ_i %(\H_i)\). However, since \(\MCG\) is infinite, we get that \(H_i\) is infinite and of finite index in \(\MCG\). But this contradicts the fact that \(H_i %\hookrightarrow_h \MCG\).
%\end{proof}
\end{prop}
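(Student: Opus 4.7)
The plan is to verify the ten HHS axioms for $(\MCG,\I)$ and then the four structural claims. The organizing principle is that Lemmas~\ref{lem: hyp emb -> QI emb} and~\ref{lem: Laterals are separated in hyp emb} ensure that $\mathrm{Cos}(\{H_i\})$ is a family of uniformly quasi-isometrically embedded subgraphs of $CS$ with uniformly bounded coarse intersections, so Corollary~\ref{cor: Simple Factor System} applies and produces an intermediate factor-system HHS on $CS$ whose top-level hyperbolic space is exactly $\Pyr$ (hyperbolic by Proposition~\ref{Kapovich-Rafi}) and whose non-maximal elements are the cosets. This intermediate structure is the scaffolding into which the $\S$-data and the pushed-forward $\H_i$-data slot.

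I would check the axioms by splitting into three regimes: pairs entirely in $\S$, pairs entirely in a single fiber $\H_i\times\{g\}$, and mixed pairs. Within $\S$, nothing substantive changes: $\prescript{\I}{}{\pi_S}$ is the composition of the old $\pi_S$ with the $1$-Lipschitz quotient $CS\to\Pyr$, and the old $\rho$-maps are reinterpreted as subsets of $\Pyr$. Within a fiber, the axioms are inherited from $(H_i,\H_i)$ after identifying $gH_i\cong H_i$; that $\prescript{\I}{}{\pi_W}=\prescript{\H_i}{}{\pi_U}\circ p_{gH_i}$ is coarsely Lipschitz only uses that $p_{gH_i}$ is coarsely Lipschitz in $CS$. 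Finite complexity is immediate because orthogonality and nesting do not cross the $\S/\Z$ boundary (except via $S$), and partial realization reduces to partial realization in each of the two constituent structures.

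The cross-regime verification is the content of the proof. For a mixed pair, the definition sets them transverse, and the $\rho$-maps are built by composing closest-point projection to the appropriate coset with the inherited $\H_i$-data. The key Behrstock-type consistency inequality reduces to the Behrstock inequality in the factor-system HHS of Corollary~\ref{cor: Simple Factor System}: if $\prescript{\I}{}{\pi_S}(x)$ is far from $\rho^V_S\subset\Pyr$, then $x$ is uniformly close in $CS$ to the coset $gH_i$ carrying $V$, and $\prescript{\I}{}{\pi_V}(x)$ is then controlled through $p_{gH_i}$. Large links at $S$ is handled by Proposition~\ref{prop: Bound Hausdorff distance}, which replaces a $\Pyr$-geodesic with a de-electrified $CS$-quasigeodesic decomposing into $CS$-segments (absorbed by large links for $(\MCG,\S)$) and coset pieces (each absorbed by the corresponding fiber in $\Z$). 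Bounded geodesic image and uniqueness follow along the same lines. The main obstacle I expect is uniformity: all estimates must be independent of the coset $gH_i$, which is precisely what the uniformity clauses of Lemmas~\ref{lem: hyp emb -> QI emb} and~\ref{lem: Laterals are separated in hyp emb} provide.

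For the structural claims, (ii) and (iv) are immediate from $\I=\S\sqcup\Z$ together with $\phi_i^\bigcirc(\H_i)\subseteq\Z_i$. For (i), the left action of $\MCG$ on itself permutes the cosets, hence acts on $\Z$ by permuting fibers and on each hyperbolic space by isometry after composing with a bijection of representatives from $\MCG(i)$; finiteness of $\{H_i\}$ and cofinite $\S$-orbits give cofinitely many $\MCG$-orbits on $\I$. For (iii), $\phi_i$ sends $U\mapsto(U,e)$ with $e$ the chosen representative of $H_i$ in $\MCG(i)$ and acts as the inclusion on hyperbolic spaces; coarse equivariance and coarse commutation with projections and $\rho$-maps are built into Construction~\ref{construction HHG}, making $\phi_i$ a homomorphism of HHGs.
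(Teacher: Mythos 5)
Your high-level organization — split into the $\S$-regime, the fiber regime, and the mixed regime; use Lemmas~\ref{lem: hyp emb -> QI emb} and \ref{lem: Laterals are separated in hyp emb} together with Corollary~\ref{cor: Simple Factor System} to get a factor-system HHS on $CS$ whose cone-off is $\Pyr$; handle Large Links via de-electrification — matches the paper's plan. But the proposal treats the two genuinely hard verifications as routine reductions, and the reductions you describe do not actually close the gaps without an additional technical lemma that is the real content of the paper's proof.

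For the mixed Behrstock case (transverse $U\in\S-\{S\}$ and $V\in\Z$ living over a coset $gH_i$): the reduction you sketch shows at best that when $d_{CS}(p_{gH_i}(x),\rho^U_S)$ is large, then $\pi_U(x)$ is uniformly close to $\pi_U(gH_i)$. That alone does not bound $d_U(\pi_U(x),\rho^U_V)$; you still need $\mathrm{diam}_{CU}(\pi_U(gH_i))$ to be uniformly bounded over all cosets $gH_i$ and all $U\in\S-\{S\}$. This is the paper's Lemma~\ref{lem: The element of H projects uniformly on the U}, and it is not automatic: $CS$ is a Cayley graph with respect to a possibly infinite generating set, $\pi_U$ is coarsely Lipschitz only with respect to the word metric on $\MCG$, and the relevant bounded piece of the coset $gH_i\cap B_\Delta(x_U)$ is bounded in $CS$, not a priori in $\MCG$. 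The paper has to transit through the finite set $F=B_{2\Delta}(\1)\cap H$ and use BGI to control the rest of the coset, and the argument is several steps. Your proposal omits this entirely.

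For Large Links at $S$: saying the $CS$-segments are ``absorbed by large links for $(\MCG,\S)$'' and the coset pieces ``absorbed by the corresponding fiber in $\Z$'' does not yield a count linear in $d_{\Pyr}(x,y)$. Two ingredients are missing. First, you must recoarsen the de-electrified quasigeodesic so that every coset piece has length at least a threshold $\Theta$; only then does $\sum_i L_{CS}(\gamma_i)\leq \Theta L_{\Pyr}(\widehat\gamma)\lesssim d_{\Pyr}(x,y)$, which is what actually feeds into the linear bound. Second, you need a concentration statement (the paper's Lemma~\ref{lem: gamma*beta*gamma}): for any $U\in\S-\{S\}$, the $\pi_U$-diameter of the whole de-electrified path is controlled by that of a single non-coset segment $\gamma_i$. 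Without it, the $\pi_U$-projection could accumulate across many $\gamma_i$ even though each is individually small, and the $\S$-Large Links applied piece-by-piece would miss such a $U$. This lemma again invokes Lemma~\ref{lem: The element of H projects uniformly on the U} to discard the coset pieces. Both of these are substantial, and both are absent from the proposal, so as written the Large Links step is not a proof.

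The remaining axioms, the structural claims (ii)--(iv), and the passage from HHS to HHG via the $\MCG$-action are handled in the proposal essentially as in the paper and are fine.
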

\begin{proof}
The second and last point are clear by construction. The hard part of the proof consists in showing that \((\MCG, \I)\) is an HHS, and we will do by verifying the axioms of hierarchically hyperbolic space. The fact that \((\MCG, \I)\) is an HHG and that the maps \(\phi_i\) are homomorphisms of hierarchically hyperbolic groups follow from the fact the HHS structures  are clearly compatible with the \(\MCG\) and \(H_i\) actions.

\emph{\(\bullet\) There is a set of projections \(\{\pi_W \mid  W \in \I\}\) that are uniformly quasi-Lipschitz.}
This follows because the maps \(\prescript{\Z}{}{\pi_W}\) are defined as composition of uniformly quasi-Lipschitz maps.

\emph{\(\bullet\) There is a relation \(\sqsubseteq\) and sets \(\rho^U_V\) such that for each pair \(U, V\) with \(U\) properly nested in \(V\), the set \(\rho^U_V\) has uniformly bounded diameter}. Lemma \ref{lem: Laterals are separated in hyp emb} and the fact that \((H_i, \H_i)\) is a hierarchically hyperbolic group gives the statement on \(\Z\). Since the map \(CS \rightarrow \Pyr\) is distance-non-increasing, it holds also on \(\S\). Since, by construction, if two elements are nested, then they both belong to either \(\S\) or \(\Z\) or one is in \(\Z\) and the other is \(S\). But since each element of \(\Z\) is coned-off in \(\Pyr\), we get the claim.

\emph{\(\bullet\) Orthogonality.} This clearly holds because if \(U \bot V\), then \(U, V \in \S\).

\emph{\(\bullet\) Behrstock's inequalities.}
	We start with an useful lemma.
	
	\begin{lemma}\label{lem: The element of H projects uniformly on the U}
	Let \((\MCG, \S)\) be a hierarchically hyperbolic group, \(S \in \S\) be the maximal element and \(X\) be a set of generators for \(\MCG\) such that \(CS = \mathrm{Cay}(\MCG, X)\) (as in Remark \ref{rmk:CS=Cayley_graph}). Let \(H\) be a finitely generated group and \(Y\) a finite set of generators for \(H\). Finally let \( \phi \colon H \rightarrow \MCG\) be a group homomorphism that is a quasi-isometric embedding with respect to the word metrics in \(Y\) and \(X\).
Then there is \(C\) such that for each \(g \in \MCG\), and \(U \in \S -S\), one has \(\mathrm{diam}_{CU}(\pi_U (g\phi(H))) \leq C\).
\begin{proof}
We recall that \(CS= \mathrm{Cay}(\MCG, X)\). 
Since, for each \(U \in \S -\{S\}\), the space \(\rho^U_S\) has uniformly bounded diameter in \(CS\), and since \(H\) is quasi-isometrically embedded in \(CS\), the following holds.
There is a uniform radius \(\Delta = \Delta (H)\) and for each \(U \in \S -\{S\}\) a point \(x_U \in CS\) such that 
\begin{rules}
\item \(N_E (\rho^U_S) \subseteq B_\Delta (x_U)\), where \(E\) is the constant for the bounded geodesic image property for \(\S\), 
\item For each \(g \in \MCG\) and \(y \in gH - B_\Delta (x_U)\), the projection of \(y\) on \(B_\Delta (x_U) \cap gH\) is not contained in \(N_E(\rho^U_S)\) (see figure \ref{fig:_projection on the outer ball}).
\end{rules}

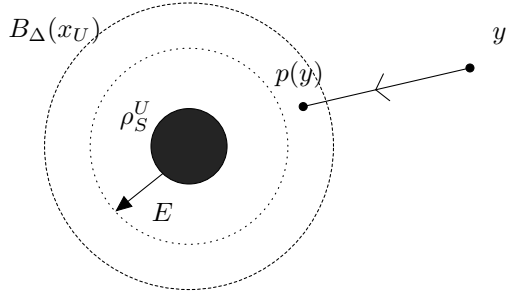
\begin{figure}[h]
\begin{center}
\begin{tikzpicture}[line cap=round,line join=round,>=triangle 45,x=1cm,y=1cm]%\clip(-18.650235461574457,-11.803340130450078) rectangle (7.9589727409460815,12.856475529761266);
\draw [fill=black,fill opacity=0.86] (0,0) circle (0.5cm);\draw [dotted] (0,0) circle (1.3cm);\draw [dash pattern=on 1pt off 1pt] (0,0) circle (1.9cm);\draw [->] (-0.3448666039168691,-0.3620318017838286) -- (-0.9698946691989615,-0.8656236657228342);\draw (3.6946765546447127,1.0382832420117576)-- (1.501609944546866,0.5265676996555935);\draw (2.455763340220989,0.7492034919795559) -- (2.5566157760281407,0.9604003575521776);\draw (2.455763340220989,0.7492034919795559) -- (2.639670723163441,0.6044505841151749);
	\draw[color=black] (-0.7,0.4) node {$\rho^U_S$};
	\draw[color=black] (-1.7636225638210388,1.5378150809784894) node {$B_\Delta(x_U)$};
	\draw [fill=black] (3.6946765546447127,1.0382832420117576) circle (1.5pt);
	\draw[color=black] (4.084555063106552,1.4647128606418947) node {$y$};
	\draw [fill=black] (1.501609944546866,0.5265676996555935) circle (1.5pt);
	\draw[color=black] (1.4528751309891361,0.9529973182857303) node {$p(y)$};
	\draw[color=black] (-0.35031297064687117,-0.8745581901291419) node {$E$};
\end{tikzpicture}
\end{center}
\caption{The projection of the point \(x\) on the set \(B^U_\Delta\) lies outside the \(E\)-neighborhood of \(\rho^U_S\). }\label{fig:_projection on the outer ball}
\end{figure}
Firstly, we want to uniformly bound the diameter of the various projections \(\pi_U(gH \cap B_\Delta (x_U))\). Note that the projections \(\pi_U \colon \MCG \rightarrow 2^{CU}\) are uniformly quasi-Lipschitz, but with respect to the metric on \(\MCG\) which is, in general, not quasi-isometric to \(CS\). We will show that is nevertheless possible to uniformly bound the diameter in \(\MCG\) of the sets \(gH \cap B_\Delta (x_U)\), which gives the desired estimate. 

Let \(F = B_{2\Delta} (\1) \cap H\).
Since \(H\) is finitely generated and quasi-isometrically embedded in \(CS\), \(F\) is a finite set. Moreover, for each \(g \in \MCG\) and \(U \in \S -\{S\}\), there exists \(h \in gH\) such that \(gH \cap B_\Delta (x_U) \subseteq hF\). For instance, choosing \(h\) to be the closest point projection of \(x_U\) on \(gH\) works. 
Since the set \(F\) is finite, it has finite diameter in \(\MCG\). Moreover, left multiplication by \(h\) is an isometry. Thus we can uniformly bound the diameter in \(\MCG\) of of the sets \(hF\), and thus of the sets \(gH \cap B_\Delta (x_U)\).

Now, consider a point \(z \in gH - B_\Delta (x_U)\). We claim that the projection \(\pi_U(z)\) lies in a uniform neighborhood of \(\pi_U(gF)\). In fact, since \(z \not\in B_\Delta (x_U)\),  \(z\) lies outside the ball \(B_\Delta (x_U)\) in \(CS\). Let \(\gamma\) be a geodesic of \(CS\) between \(z\) and its closest point projection on \(B_\Delta(x_U)\). By construction \(\gamma \cap N_E(\rho^U_S) = \emptyset\), and thus, by the bounded geodesic image property, there is a uniform \(B\) such that \(\mathrm{diam}_{CU}(\pi_U(\gamma)) \leq B\). In particular, \(\pi_U(z)\) lies in the \(B\)-neighborhood of \(\pi_U(B_\Delta (x_U))\), which concludes the proof.
\end{proof}
\end{lemma}

%NOTE TO SELF:
%
%What I really need in the above proof is that for each ball of \(CS\), one has \(|H \cup CS|< \infty\), and that the projection in \(CS\) on a ball (wrt \(CS\)) of points of \(H\) of a point of \(H\) that does not lie in the ball, is not near the center. 

We will now proceed to prove Behrstock's inequalities.

	\begin{enumerate}
	\item[1] We have to show that there exists a uniform \(\kappa\) such that for all \(U, V \in \I\) not nested into each other, it holds: 
	\[\mathrm{min}\{d_U(\pi_U(x), \rho^V_U), d_V(\pi_V(x), \rho^U_V)\} \leq \kappa.\]
	It is easily seen that there are only two cases we should worry about: when \(U \in \Z_i\) and \(V \in \Z_j\) with \(i \neq j\), or when
	\(U \in \S-\{S\}\) and \(V \in \Z-\{S\}\).
	Consider the first one. Unraveling the definitions, it is clear that it suffices to show the case when \(U=g_1H_i\) and \(V= g_2H_j\), with 
	\(i \neq j\). However, Lemma \ref{lem: Laterals are separated in hyp emb} and Corollary \ref{cor: Simple Factor System} give that the set of the various laterals of the \(H_i\) in \(\MCG\)
	 is a factor system for \(CS\). Then the result follows. 
	 	
	So, consider the second case. Let \(i \in I\), \(g \in \MCG (i)\) and \(J \in \H_i\) such that \(V= (J, g)\).
	Unraveling the definitions, we get that we need to show that there exists a uniform \(\kappa\) such that:
	\[\mathrm{min}\{d_U(\pi_U(x), \pi_U(\rho^V_S)), d_{C_{\H_i} J}(\pi_J \circ p_{gH_i}(x),\pi_J\circ p_{gH_i}(\rho^U_S))\} \leq \kappa,\]
	were we remark that all the projections are considered with the metric of \(CS\). 
	We will show that if the second term is sufficiently large, then we can uniformly bound the first one.
	Note that, since the maps \(\pi_J \) are uniformly quasi-Lipschitz and since \(gH_i\) is quasi-isometrically embedded in \(CS\),
	 we can substitute the second term with 
	\(d_{CS}(p_{gH_i}(x),p_{gH_i}(\rho^U_S))\).
	Let \(E, \kappa_0\) be the constants coming from the bounded geodesic image and Behrstock's inequality conditions of the HHG \((\MCG, \S)\). 
	Let \(\Delta = E + \kappa_0\). Let \(y=p_{gH_i}(x)\) and \(z= p_{gH_i}(\rho^U_S)\). 
	%Since the spaces \(C_\Z J\) are quasi-isometrically embedded into \(CS\), we can control \(d_{CS} (y,z)\)
	%in terms of \(d_{C_\Z J}(y,z)\). 
	Let \(\gamma\) be a geodesic of \(CS\) between \(x\) and \(y\). 
	Since the closest point projection on \(gH_i\) is a quasi-Lipschitz map, say with constant \(L\), if \(d_{CS}(y,z) > L\Delta +L\), 
	we obtain that \(\gamma \cap N_E(\rho^U_S) = \emptyset\). 
	The bounded geodesic image condition applied  to \(\gamma\), 
	gives that \(\mathrm{diam}_{CU}(\rho^S_U(\gamma))\) is uniformly bounded.
	In particular, since \(y \in {gH_i}\), we get that \(d_U(\rho^S_U(x), \rho^S_U({gH_i}))\) is uniformly bounded.
	Since \(U, S \in \S\), the Behrstock's inequality condition gives that, for every \(x \in \MCG\), we have
	\[\mathrm{min}\{d_{CS}(x, \rho^U_S), \mathrm{diam}_{CU}(\pi_U(x) \cup \rho^S_U(x))\}\leq \kappa_0.\]
	But we have that \(d_{CS} (\{x,y\}, \rho^U_S) > L\Delta +L \geq \kappa_0\). Thus, we must have \(d_{CU}(\pi_U(x), \pi_U({gH_i}))\) is uniformly bounded. 
	Since, by Lemma \ref{lem: The element of H projects uniformly on the U}, the projection \(\pi_U(gH_i)\) is uniformly bounded, we get the claim.
	\item[2] We have to show that for each \(V \sqsubseteq W \) properly nested into each other, there is an uniform \(\kappa\) such that
	\[\mathrm{min}\{d_{W}(x, \rho^V_W), \mathrm{diam}_{CV}(\pi_V(x) \cup \rho^W_V(x))\}\leq \kappa_0.\]
	The only case that we need to check is when \(W = S\). 
	So, suppose that \(V = (J, g) \in \Z\), where \(J \in \H_i\), and \(g \in \MCG(i)\). 
	Then, unraveling the definitions, the second term becomes \(\mathrm{diam}_{CJ}(\pi_J \circ p_{gH_i}(x) \cup \pi_J \circ p_{gH_i}(x))\), which has 
	uniformly bounded diameter since \(\pi_J \circ p_{gH_i}\) is uniformly quasi-Lipschitz.
	
	Suppose, instead, that \(V \in \S -\{S\}\).
	Since distance in \(\Pyr\) are smaller or equal distances in \(CS\), and since the above statement holds in \(CS\), we get the claim.
	\item[3] We need to show that whenever \(U \sqsubseteq V\) and there is \(W\) which is not orthogonal or nested into \(V\), it holds 
	\[d_W(\rho^U_W, \rho^V_W) \leq \kappa.\]
	Unraveling the definitions, it is easily to see that if \(U,V,W  \in \Z\), then the claim holds, and since distances in \(\Pyr\) are smaller or equal 
	distances in \(CS\), it also holds if 
	\(U, V, W \in \S\). So, there are only two cases of interest: \(U, V \in \S-\{S\}\) and \(W \in \Z -\{S\}\), or \(U, V \in \Z-\{S\}\) and \(W \in 
	\S -\{S\}\). 
	The second case is clear because, unraveling the definitions, one has \(\rho^U_W \subseteq \rho^V_W\).
	The first one follows from the fact that the statement holds in \(CS\) and the projections \(\rho^S_U\) and \(\rho^S_V\) are uniformly quasi-Lipschitz.
	\end{enumerate}

\emph{\(\bullet\) Finite complexity.} This is clear by construction.

\emph{\(\bullet\) Large link lemma: there exists \(\lambda \geq 1\) and \(E \) such that the following holds.
Let \(W \in \I\) and \(x,y \in \MCG\). Let \(N = \lambda d_{W}(\pi_W(x), \pi_W(y)) + \lambda\).
Then there exists \(\{T_1, \dots , T_{\lceil N \rceil}\} \subseteq \H_W\) such that for each \(V \in \H_W\), either \(V \in \H_{T_i} \cup \{T_i\}\) for some \(i \in \{1, \dots, \lceil N \rceil\}\), or \(d_V(\pi_V(x), \pi_V(y)) < E\). Also, \(d_{W} (\pi_W(x), \rho^{T_i}_W) \leq N\) for each i.}\\

Note that if \(W \in \Z\) or \(W \in \S - S\), then the claim holds because of the HHS structure of \((\MCG, \S)\) or \((H_i, \H_i)\), for the various
\(i\). 
So, we are only interested in the case \(W= S\).
We need to show that given \(x,y \in \MCG\), it is possible to linearly bound in terms of the distance in \(\Pyr\) the number of (\(\sqsubseteq\)-maximal) elements \(U\) of \(\I\) with the property that \(d_{CU}(\pi_U(x), \pi_U(y)) \leq E\). We will consider separately the elements in \(\Z\) and in \(\S\). 

Consider the set \(A\) consisting of all \(\MCG\)-translates of the various \(H_i\). We will use the following two important facts.
By Corollary \ref{cor: Simple Factor System} and Lemma \ref{lem: Laterals are separated in hyp emb}, we can easily see that \(A\) is a factor system for \(CS\). Since, by definition, \(\Pyr\) is the cone-off of \(CS\) with respect to \(A\), the Large Link Lemma holds for the set \(A\). 

Secondly, since each \(gH_i\) is uniformly quasi-isometrically embedded in \(CS\), and since the projections \(\pi_U\), for \(U \sqsubseteq gH_i\), are uniformly quasi-Lipschitz, we get that there exists a non-decreasing map \(L: \RR \rightarrow \RR\) such that the following holds. 
For each \(i \in I, g \in \MCG(i)\) and  \(U \sqsubseteq gH_i\) we have that \(d_{CU}\left(\pi_{CU}(x), \pi_{CU}(y)\right) \leq L\left(d_{gH_i}\left(\pi_{gH_i} (x), \pi_{gH_i}(y)\right)\right)\), for each \(x,y \in \MCG\).% With an abuse of notation, let \(L^{-1}(r) = \mathrm{sup}L^{-1}(r)\). 

Let \(E_1\) be the constant of the Large Link Lemma applied to the set \(A\) and let \(E_2 > L(E_1)\).
The Large Link Lemma on the set \(A\) give us a constant \(\lambda_A\) and a set \(\T^A\), with \(|\T^A| \leq \lambda_A d_{\Pyr} (x,y) + \lambda_A\), such that \(gH_i \in \T^A\) if and only if \(d_{gH_i} (p_{gH_i} (x), p_{gH_i}(y)) \leq E_1\). But then, for each \(U \in \Z\), we have that \(d_{CU}(\pi_U(x), \pi_U(y)) \leq E_2\) if and only if \(U \sqsubseteq gH_i\) with \(d_{gH_i}(\pi_{gH_i}(x), \pi_{gH_i}(y)) > E_1\). Thus, for each \(E_2 > L(E_1)\) the Large Link Lemma holds on the set \(\Z\).

Note that the key ingredient in the above proof is that we already have a version of the Large Link Lemma that holds with the distances in \(\Pyr\). 
 If such a result was available also for \(\S\), then, since each element of \(\I\) is contained in either \(\Z\) or \(\S\), we would get the result. However, given two points \(x,y \in \Pyr\), it is, in general, impossible to estimate \(d_{CS} (x,y)\) in terms of \(d_{\Pyr}(x,y)\).

We will show that, for a large enough \(E\), it is however possible to estimate the number of (maximal) elements  \(W \in \S\) with \(d_W(\pi_W(x), \pi_W(y))> E\) in terms of \(d_{\Pyr}(x,y)\), which is enough for our purposes. This will require some amount of work.

\begin{lemma}\label{lem: gamma*beta*gamma}
For each \(C>0\)  there exist \(T=T(CS, C)\) and \(\Theta=\Theta (CS)\) such that the following holds.
Let \(\gamma= \gamma_0*\beta_1*\gamma_1* \cdots *\beta_M*\gamma_{M}\) be a \(C\)-quasi-geodesic of \(CS\) such that for each \(i\) there is \(g_i\) and \(j \in I\) such that \(\beta_i \subseteq g_i H_j\) and \(L(\beta_i)> \Theta\).%, and suppose for each \(i \not\in \{0, M\}\), the segment \(\gamma_i\) is non trivial. 
Then for each \(U \in \S -\{S\}\), there exists \(0\leq i \leq M\) such that 
\(\mathrm{diam}(\pi_U(\gamma)) - T \leq \mathrm{diam}(\pi_U(\gamma_i))\).
\end{lemma}

\begin{proof}
Let \(E\) be the constant of the bounded geodesic image property for \(\S\) and \(H\) be such that all \(C\)-quasi-geodesics of \(CS\) with the same endpoints have Hausdorff distance at most \(H\). Choose \(U \in \S -\{S\}\) and  consider \(B=  N_{E+H} (\rho^U_S)\). If \(\gamma\) does not intersect \(B\) the result trivially follows from the bounded geodesic image for \(\S\). So suppose that \(\gamma\) intersects \(B\). Roughly speaking, if \(\Theta\) is large enough, all the segments \(\gamma_i\) of \(\gamma\) becomes "far away". In particular, we will get that there is at most one segment \(\gamma_i\) that is near \(B\). More precisely, setting \(\Theta = C(3H +E) +C\) we obtain that there is \( i \in \{0, \dots, M\}\) such that \((\gamma - (\beta_i * \gamma_i * \beta_{i+1})) \cap B = \emptyset\). Let \(\chi_1, \chi_2\) be the connected components of \(\gamma - (\beta_i * \gamma_i * \beta_{i+1})\).  The bounded geodesic image for \(\S\)  gives that \(\mathrm{diam}(\pi_U(\chi_j))\) is uniformly bounded, for \(j \in \{1,2\}\). Since each of the segments \(\beta_i, \beta_{i+1}\) is contained in some element of \(\mathrm{Cos}(\{H_i\})\), by Lemma \ref{lem: The element of H projects uniformly on the U} we get the claim.
\end{proof}

So, consider points \(x, y  \in \Pyr\). Since \(\Pyr\) is obtained from \(CS\) coning-off a family of uniformly quasi-isometrically embedded subgraphs, we can apply Proposition \ref{prop: Bound Hausdorff distance}  and get a quasi-geodesic \(\widehat{\gamma}\) of \(\Pyr\) with the property that each embedded-de-electrification \(\widetilde{\gamma}^e\) of \(\widehat{\gamma}\) is uniformly a quasi-geodesic of \(CS\). Fix such a de-electrification \(\widetilde{\gamma}^e\). 
%In particular, the bounded geodesic image property for \(\S\) gives that there are uniform \(\Delta, \epsilon\) such that for each \(U \in \S -\{S\}\), we have \(\mathrm{diam}(\pi_U(\widetilde{\gamma}^e)) \leq \mathrm{diam}(\pi_U(\widetilde{\gamma}^e \cap N_\Delta (\rho^U_S))) + \epsilon\).
%Since for each \(U \in \S -\{S\}\) the diameter \(\mathrm{diam}(N_E(\rho^U_S))\) is uniformly bounded, this implies that we can estimate the diameter of the projection on \(U\) of \(\widetilde{\gamma}^e\) in terms of the projection on \(U\) of \(\widetilde{\gamma}^e \cap N_\Delta (\rho^U_S)\), which has a uniformly bounded number of pieces. Let \(M\) be such an estimate. 

We want to apply Lemma \ref{lem: gamma*beta*gamma} to \(\widetilde{\gamma}^e\). In order to do that, we will subdivide \(\widetilde{\gamma}^e\) in a way that satisfies the hypotheses of Lemma \ref{lem: gamma*beta*gamma}.

%\paragraph*{Subdividing \(\widehat{\gamma}\) and \(\widetilde{\gamma}^e\).}
Recall that the quasi-geodesic \(\widehat{\gamma}\) decomposes as 
\[\widehat{\gamma} = \widehat{\gamma}_0 \ast e_1 \ast \cdots \ast e_M \ast \widehat{\gamma}_M,\] where each \(e_i\) is a \(\mathrm{Cos}(\{H_i\})\)-component and all the \(\widehat{\gamma}_i\) are quasi-geodesics of \(CS\) (in particular, they do not contain \(\mathrm{Cos}(\{H_i\})\)-components).
Thus, \(\widetilde{\gamma}^e\) has the form \[\widetilde{\gamma}^e = \widehat{\gamma}_0 \ast \beta_1 \ast \cdots \ast \beta_M \ast \widehat{\gamma}_M,\] where each \(\beta_i\) is obtained substituting each \(e_i \in g_i H_j\) with a geodesic of \(g_i H_j\) connecting the endpoints of \(e_i\).
 Let \(\Theta \) be the constant coming from Lemma \ref{lem: gamma*beta*gamma}. We will choose a coarser subdivision of \(\widehat{\gamma}\) and \(\widetilde{\gamma}^e\) such that all the subsegments \(\beta_j\) have length greater or equal to \(\Theta\). More formally, we obtain a new subdivision as follows: let \(j \in \{1, \dots ,  M\}\) be the first index such that \(L(\beta_j) < \Theta\). 
Then for each \(i < j-1\), set \(\gamma_i' = \widehat{\gamma}_i\) and \(\beta_i' = \beta_i\). Set \(\gamma_{i-1}'= \widehat{\gamma}_{i-1} * \beta_i *\widehat{\gamma}_i\) and for each \(i \geq j\) set \(\gamma_i'=\widehat{\gamma}_{i+1}\) and \(\beta_i' = \beta_{i+1}\). Proceeding in this way, we obtain a new subdivision:
\[\widetilde{\gamma}^e = \gamma_0 \ast \beta_1 \ast \cdots \ast \beta_n \ast \gamma_n,\] 
where each of the \(\beta_i\) is contained in one element of \(\mathrm{Cos}(\{H_i\})\) and has length greater or equal \(\Theta\), and each of the \(\gamma_i\) may contain some \(\mathrm{Cos}(\{H_i\}\))-piece of length strictly less than \(\Theta\). 

For each segment \(\gamma_i\), let \(\overline{\gamma_i}\) be the subsegment of \(\widehat{\gamma}\) with the same endpoints of \(\gamma_i\) (that is, the segment obtained substituting back the \(\mathrm{Cos}(\{H_i\})\)-components in \(\gamma_i\)).
It is clear that \(\gamma_i\) is an embedded-de-electrification of \(\overline{\gamma_i}\), but with the remarkable property that each \(\mathrm{Cos}(\{H_i\})\)-components \(e\) of \(\overline{\gamma_i}\) de-electrify to a \(CS\) segment of length at most \(\Theta\).
Thus, for each \(i\) the following holds: 
\[L_{CS} (\gamma_i) \leq \Theta L_{\Pyr} (\overline{\gamma_i}).\]
Let \(\tau\) be such that \(\widehat{\gamma}\) is a \(\tau\)-quasi-geodesic of \(\Pyr\). We have: 
\[\sum_0^n L_{CS} (\gamma_i) \leq \sum_0^n \Theta L_{\Pyr}(\overline {\gamma_i}) \leq \Theta L_{\Pyr}(\widehat{\gamma}) \leq \tau \Theta d_{\Pyr}(x,y) + \tau.\]

That is, we can estimate the length of the portion of \(\widetilde{\gamma}^e\) that is coarsely not contained in \(\mathrm{Cos}(\{H_i\})\) in terms of \(d_{\Pyr}(x,y)\). 

Since \(\widetilde{\gamma}^e\) is uniformly a quasi-geodesic of \(CS\),    \ref{lem: gamma*beta*gamma} gives that there is a uniform \(F\) such that for each \(U \in \S -\{S\}\), if \(\mathrm{diam}_{CU}(\pi_U (\{x,y\})) > F\), then  there is \(i\) such that \(\mathrm{diam}_{CU}(\gamma_i)> E_\S\), where \(E_\S\) is the large link lemma constant for \(\S\). 
The large link lemma property for \(\S\) furnish a series of subsets \(\T_i \subseteq \S-\{S\}\) such that if there exists \( U \in \S -\{S\}\) with \(\mathrm{diam}_{CU}(\gamma_i)> E_\S\), then \(U\) is nested in one element contained in \(\T_i\). 
Let \(\T = \cup_i \T_i\). We have that there is a uniform \(\lambda\) such that \(|\T| \leq \lambda (\sum_{i=0}^n L_{CS} (\gamma_i)) +\lambda\). 
Since the latter can be uniformly linearly estimated in terms of \(d_{\Pyr} (x,y)\), we get the result.

\emph{\(\bullet\) Bounded geodesic image: there exists \(E_\I\) such that for all \(W \in \I\), \(V \in \I_W -\{W\}\) and geodesics \(\gamma\) of \(CW\), either \(\mathrm{diam}_{CV} (\rho^W_V(\gamma)) \leq E_\I\), or \(N_{E_\I} (\rho^V_W) \cap \gamma = \emptyset\).}

The statements clearly holds in the case \(W \neq S\) or \(W = S\) and \(V \in \Z\). So, we only have to check the statement for \(W = S\) and \(V \in \S\). 
But since, by Proposition \ref{Kapovich-Rafi},  geodesic of \(CS\) and geodesics of \(\Pyr\) have uniformly bounded Hausdorff distance (with respect to the metric in \(\Pyr\)), the conclusion follows from the bounded geodesic image property for \(CS\).

\emph{\(\bullet\) Partial Realization}
Note that a family of pairwise orthogonal elements is either contained in \(\S\) or \(\Z\). Since \(\Z\) is a factor system, it holds in \(\Z\), and ii holds on \(\S -\{S\}\). Thus we get that the property holds on the whole \(\I\).

\emph{\(\bullet\) Uniqueness}
This holds in \(\Z\), and that is enough, since for each \(W \in \Z\), \(C_\I W = C_\Z W\) and \(\prescript{\I}{}{\pi_W} = \prescript{\Z}{}{\pi_W}\). 

\end{proof}

We are now ready to prove Theorem \ref{thm: main application combination}.

\begin{proof}[Proof of Theorem \ref{thm: main application combination}]
The proof will follow three steps. 
Firstly we will show that it is possible to equip the new added edge and vertex groups with an HHG structure. 
Then we will need to show that the whole graph of groups is a graph of HHG. To achieve this, we will change the HHG structure on (some) of the original vertices. 
Finally we will verify that the new graph of HHG satisfies the hypotheses of Theorem \ref{thm: combination HHG}. 

For each vertex \(v \in \GG - \GG'\), we have that \(G_v\) is hyperbolic and that the adjacent edge groups \(\{H_i\}\) are a finite family of quasi convex subgroups of \(G_v\). Then Theorem \ref{thm: HHG structure on G} applied to \((G, \{H_i\})\) yields a hierarchically hyperbolic group structure  \((G, \F)\) on \(G\). This induces a hierarchically hyperbolic structure \((H_i, \H_i)\) for each \(i\),  such that the inclusion maps \(j_i: H_i \hookrightarrow G_v\) are homomorphisms of hierarchically hyperbolic groups. 
Indeed, this follows because \([H_i] \in \F\), \(\left( C_\F H_i, \F_{H_i} \right)\) admits an HHS structure (see Corollary \ref{cor: sub-fac-sys Gps}) and  \(H_i\) is quasi-isometric to \(C_\F H_i\).
It is straightforward that all the maps \(j_i\) are full and, since there are no orthogonality relations in \(\F\), the images of the maximal elements are not orthogonal to any other element. 

For each vertex \(v \in \GG'\), let \(\{H_i^v\}\) be the set of edge groups adjacent to \(v\) that correspond to edges of \(\GG - \GG'\). By hypothesis, we have that \(\{H_i^v\} \hookrightarrow_{hh} G_v\). Note that, by the process above, all elements of \(\{H_i^v\}\) are equipped with an HHG structure.  Applying Proposition \ref{prop:properties_of_the_new_structure} to the pair \((G_v, \S_v); \{(H_i^v, \H_i^v)\}\), we get new HHG structures \((G_v, \I_v)\) on the \(G_v\) that turn \(\GG\) into a graph of hierarchically hyperbolic groups. 
The bounded supports condition follows from Lemma \ref{lem:def_bounded_support}, the fullness of the maps and the orthogonality condition are clear by construction.

We need to show that the images of the edge maps are hierarchically quasi-convex. To simplify notation, we will denote \(d_{\widehat{V}}(\pi_V(x), \pi_V(y))\) as \(d_{\widehat{V}}(x,y)\).
Consider an edge \(e\) and a vertex \(v\) adjacent to it. Let \(\Y\subseteq \X\) be the image of the edge map. We want to show that \(\Y\) is hierarchically quasi-convex. There are several cases that needs to be considered.

\textit{The case where \(e\) is an edge of \(\GG'\).}
If the hierarchical structure on the vertex group \(\MCG_v\) is unchanged, then \(\Y\) is hierarchically quasi-convex by hypothesis. 
Otherwise, let \((\MCG_v, \S_v)\) be the old hierarchical structure, \((\MCG_v, \I_v)\) be the new one and \(S\) be the maximal element of both \(\S_v\) and \(\I_v\). Finally, let \(CS\) denote the space \(C_{\S_v}S\) and let \(\Pyr\) denote the space \(C_{\I_v}S\).  

Since \(\Y\) is hierarchically quasi-convex in \((\MCG_v, \S_v)\) (and in particular quasi-convex), we get that for each element \(gH_i \in \mathrm{Cos}(\{H_i\})\), the projection \(p_{gH_i}(\Y)\) is uniformly quasi-convex.
Let \(x \in \MCG_v\) be such that \(d_{CU}(x,\Y)\leq r\) for all \(U \in \I_v\).
We want to show that it is possible to uniformly bound \(d_{\MCG}(\Y, x)\).
 If we can bound \(d_{CS}(x,\Y)\), then the result would follows from the fact that \(\Y\) is hierarchically quasi-convex in \((\MCG_v, \S_v)\). In order to obtain this bound we will consider two additional different sets of HHS structures on the various spaces involved.
  
Firstly, let \(A\) be the set of cosets of the various \(H_i\) seen as subspaces of \(CS\). By Corollary \ref{cor: Simple Factor System} and Lemma \ref{lem: Laterals are separated in hyp emb}, we have that \(A\) is a factor system for \(CS\). 
By Proposition \ref{prop: Y is hierarchically quasi-convex}, we have that \(\Y\) is hierarchically quasi-convex in \((CS, A\cup\{CS\})\). Note that \(\Pyr\) is quasi-isometric to the cone-off of \(CS\) with respect to \(A\), and, by hypothesis, we have \(d_{\Pyr}(\Y, x) \leq r\). 
If for each element \(gH_i \in A\), we could uniformly bound \(d_{CgH_i}(\Y, z)\), then hierarchically quasi-convexity of \(\Y\) in \((CS, A \cup \{CS\})\) would produce a bound on \(d_{CS}(\Y, x)\) and thus, a bound on \(d_{\MCG}(\Y, x)\). 

Thus, consider a coset \(gH_i\). If \(W=(H_i, g) \in \Z\), one has that \((gH_i, \Z_W)\) is a hierarchically hyperbolic space, with hyperbolic total spaces. Thus we can apply Proposition \ref{prop: Y is hierarchically quasi-convex} to obtain that \(\Y\) is hierarchically quasi-convex in \((gH_i, \Z_W)\). Since, by hypothesis, for each \(U \in \Z_W (\subseteq \I_v)\) we have \(d_{CU}(\Y, x)\leq r\), there is a function \(k\) depending on \(i\) but not on \(g\) such that  \(d_{gH_i}(x,\Y)\leq k(r)\). Since there are only finitely many choices for \(i\), the result follows.

\textit{The case where \(e\) is an edge of \(\GG -\GG'\) and \(v\) is a vertex of \(\GG'\).}
The image of the edge map \(\Y\) coincide with one of the subgroups \(H_i\). 
Then we have that \(\pi_U(\Y)\) is uniformly bounded when \(U \in \I - \Z\), because of Lemma \ref{lem: The element of H projects uniformly on the U}, or when \(U \sqsubseteq (g, H_j)\), with \(j \neq i\), because of Lemma \ref{lem: Laterals are separated in hyp emb}.
In the other case, \(\pi_U(\Y)\) coarsely coincide with \(CU\) by normalness (Proposition \ref{prop:normalized HHS}).
This clearly implies that for each \(U\), the space \(\pi_U(\Y)\) is uniformly quasi-convex.

Now let \(x\) be a point such that \(d_{CU}(x,\Y) \leq r\) for all \(U \in \S\). Let \(y\) be a point witnessing the closest point projection of \(x\) on \(H_i = \Y\). It easily follows from the above discussion that for each \(U \in \S\), \(|d_{CU}(x,y)- d_{CU}(x, \Y)|\) is uniformly bounded. Then the result follows from the distance formula applied to \(x\) and \(y\).

\textit{The case where \(e\) is an edge of \(\GG -\GG'\) and \(v\) is a vertex of \(\GG\).}
This is a straightforward consequence of Proposition \ref{prop: Y is hierarchically quasi-convex}.

\end{proof}

\end{document}